\numberwithin{equation}{section}
\newtheorem{theorem}{Theorem}[section]
\newtheorem{lemma}[theorem]{Lemma}
\newtheorem{corollary}[theorem]{Corollary}
\newtheorem{definition}[theorem]{Definition} 
\newtheorem{remark}[theorem]{Remark}
\newtheorem{example}[theorem]{Example}
\newenvironment{proof}[1][\textbf{Proof}]{\noindent\textbf{#1. }}{\ $\square$}
\title{Powers of Principal $Q$-Borel ideals}
\author{
    Eduardo Camps Moreno\\
  Department of Mathematics\\
  Escuela Superior de F\'isica y Matem\'aticas\\
  Mexico City, Mexico \\
  \texttt{camps@esfm.ipn.mx} \\
  
   \And
 
 Craig Kohne \\
  Department of Mathematics and Statitics\\
  McMaster University\\
  Hamilton, ON, Canada \\
  \texttt{kohnec@math.mcmaster.ca} \\
  
  \And
  
  Eliseo Sarmiento\\
  Department of Mathematics\\
  Escuela Superior de F\'sica y Matem\'aticas\\
  Mexico City, Mexico\\
  \texttt{esarmiento@ipn.mx}\\
  
  \And
  
  Adam Van Tuyl\\
  Department of Mathematics and Statistics\\
  McMaster University\\
  Hamilton, ON, Canada\\
  \texttt{vantuyl@math.mcmaster.ca}
  }
\begin{document}
\maketitle

\begin{abstract}
Fix a poset $Q$ on $\{x_1,\ldots,x_n\}$.
A $Q$-Borel monomial ideal $I \subseteq
\mathbb{K}[x_1,\ldots,x_n]$ is
a monomial ideal whose monomials are closed under the Borel-like moves induced by $Q$. 
A monomial ideal $I$ is a principal
$Q$-Borel ideal, denoted $I=Q(m)$, if there
is a monomial $m$ such that all the minimal
generators of $I$ can be obtained via $Q$-Borel moves from $m$.  In this paper
we study powers of principal $Q$-Borel ideals.
Among our results, we show that all
powers of $Q(m)$ agree with their symbolic
powers, and that the ideal $Q(m)$ satisfies the persistence
property for associated primes.  We also
compute the analytic spread of $Q(m)$
 in terms of the poset $Q$.
 \end{abstract}

\keywords{monomial ideals, $Q$-Borel, symbolic powers, analytic spread, persistence of primes}

\section{Introduction}

Throughout this paper, 
$S = \mathbb{K}[x_1,\ldots,x_n]$ denotes the polynomial
ring over an arbitrary field $\mathbb{K}$.
Francisco, Mermin, and Schweig \cite{qborel}
introduced the notion of a $Q$-Borel
monomial ideal to generalize the properties
of Borel monomial ideals, also called
strongly stable monomial ideals (see
\cite{fms,H} and their references for more
on Borel ideals and their importance). Specifically,
we fix
a poset $Q$ on the set $\{x_1,\ldots,x_n\}$.  Then
a monomial ideal $I$ is a \emph{$Q$-Borel ideal}
if for any monomial $m \in I$, if $x_i|m$ and
$x_j \leq_Q x_i$, then 
$x_j\cdot \frac{m}{x_i} \in I$.
We call $x_j\cdot \frac{m}{x_i}$ a 
\emph{$Q$-Borel move} of $m$.
A \emph{Borel ideal} is then the special instance
when $Q$ is the chain $Q = C: x_1 < x_2 < \cdots < x_n$.
A monomial ideal $I$ is a \emph{principal $Q$-Borel ideal}, denoted $Q(m)$,
if there is a monomial $m$ such that all the
minimal generators of $I$ can be obtained from
$m$ via $Q$-Borel moves.  As shown in 
\cite{qborel} and Bhat's thesis \cite{BPhd},
many properties of $Q(m)$, e.g., projective
dimension, primary decomposition, can be
described in terms of 
the poset $Q$ and order ideals of $Q$ associated
with the monomial $m$.

Our goal in this paper is to study the properties of
powers of principal $Q$-Borel ideals.  Understanding
powers of ideals figures prominently in 
commutative algebra.  Two examples of this theme
are the ideal containment problem and the persistence
of primes.  The ideal containment problem compares
the regular powers of an ideal with its symbolic
powers. The persistence of primes asks whether 
${\rm ass}(I^s) \subseteq {\rm ass}(I^{s+1})$ for 
all $s \geq 1$, where ${\rm ass}(J)$ denotes the 
set of associated primes of $J$.   The references
\cite{BH,spmi,WalSQF,ELS,GGSVT,herzogas,HRV,HH,appei}
form a small subset of papers on these topics; see also \cite{CHHVT,DDGHN} for an introduction.

For principal
$Q$-Borel ideals $Q(m)$, we consider these (and 
other) problems.  Many of our results are expressed in terms
of the combinatorics of the poset of $Q$, thus building
upon \cite[Question 1.3]{qborel} which asked what other
properties of $Q$-Borel ideals are determined by $Q$.  One theme that becomes apparent is that principal
$Q$-Borel ideals satisfy many of the same properties as
principal monomial ideals (in fact, results about principal
monomial ideals become special cases of our work when
$Q$ is the anti-chain).

We first compare the regular and symbolic powers (formal
definitions postponed until later in the paper) of
principal $Q$-Borel ideals.   Our main result in
this direction is:

\begin{theorem}[Theorem \ref{maintheoremsymbolic}]\label{main1}
Let $I = Q(m)$ for some monomial $m$ and
poset $Q$. Then 
$$I^{(d)}=I^d ~~\mbox{for all $d  \geq 1$}.$$
\end{theorem}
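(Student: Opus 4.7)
The plan is to reduce the desired equality $I^{(d)}=I^d$ to the statement that $\mathrm{Ass}(I^d)=\mathrm{Ass}(I)$. The first step is to establish the identity $Q(m)^d=Q(m^d)$, so that every power of a principal $Q$-Borel ideal is again principal $Q$-Borel. The inclusion $Q(m^d)\subseteq Q(m)^d$ is immediate from $m^d\in Q(m)^d$ together with the observation that the $d$-th power of a $Q$-Borel ideal remains $Q$-Borel (a $Q$-Borel move on a product $v_1\cdots v_d$ can always be performed inside one of its factors). For the reverse, any generator of $Q(m)^d$ has the form $v_1\cdots v_d$ with each $v_k$ a Borel move of $m$; concatenating the move sequences $m\to v_k$, applied to distinct copies of $m$ inside $m^d$, exhibits $v_1\cdots v_d$ as a Borel move of $m^d$.

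Next, I would invoke the structural description of $\mathrm{Ass}(Q(m))$ in \cite{qborel,BPhd}: the associated primes of a principal $Q$-Borel ideal depend only on the poset $Q$ and on $\mathrm{supp}(m)$, not on the multiplicities in $m$. Since $\mathrm{supp}(m^d)=\mathrm{supp}(m)$, combining with the first step yields
$$\mathrm{Ass}(Q(m)^d) = \mathrm{Ass}(Q(m^d)) = \mathrm{Ass}(Q(m)).$$

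To finish, by definition the symbolic power is $I^{(d)}=\bigcap_{P\in\mathrm{Ass}(I)} I^dS_P\cap S$, and the containment $I^d\subseteq I^{(d)}$ is automatic. For the reverse, fix an irredundant primary decomposition $I^d=\bigcap_{Q\in\mathrm{Ass}(I^d)} J_Q$ with $J_Q$ being $Q$-primary. Since $J_QS_P=S_P$ whenever $Q\not\subseteq P$, localising at $P\in\mathrm{Ass}(I)$ keeps only the primary components with $Q\subseteq P$. Because $\mathrm{Ass}(I^d)=\mathrm{Ass}(I)$, taking $P=Q$ for each $Q\in\mathrm{Ass}(I^d)$ shows that every $J_Q$ appears in the intersection, giving $I^{(d)}=\bigcap_{Q\in\mathrm{Ass}(I^d)} J_Q=I^d$.

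The principal obstacle will be the structural input that $\mathrm{Ass}(Q(m))$ depends only on $Q$ and $\mathrm{supp}(m)$. If this is available in a citable form from \cite{qborel,BPhd}, the remainder of the argument is routine; otherwise one would have to re-derive it by computing the primary decomposition of $Q(m)$ directly from the combinatorics of $Q$ restricted to $\mathrm{supp}(m)$, and then verifying that the exponents appearing in the primary decomposition scale linearly under $m\mapsto m^d$.
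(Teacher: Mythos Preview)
Your argument is correct and takes a genuinely different, more direct route than the paper. The paper proves $I^{(d)}=I^d$ by decomposing $m$ into its maximal connected components $m_1,\ldots,m_r$, invoking the Cooper--Embree--H\`a--Hoefel formula $I^{(d)}=\bigcap_{P\in\mathrm{maxass}(I)}Q_{\subseteq P}^d$, identifying $Q_{\subseteq\langle A(m_i)\rangle}=Q(m_i)$, and then collapsing $\bigcap Q(m_i)^d=\bigcap Q(m_i^d)=\prod Q(m_i^d)=Q(m^d)=I^d$ using disjointness of the $A(m_i)$. You bypass all of this: you establish $Q(m)^d=Q(m^d)$ (the paper's Lemma~\ref{lem-principalproducts}), read off $\mathrm{Ass}(I^d)=\mathrm{Ass}(I)$ directly from the support-only description in \cite[Theorem~4.3]{qborel} (the paper's Theorem~\ref{assprimechar}), and then observe the general fact that $\mathrm{Ass}(I^d)\subseteq\mathrm{Ass}(I)$ forces every primary component of $I^d$ to survive in the intersection defining $I^{(d)}$. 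In effect you have noticed that the paper's own ``Second Proof'' of persistence (Theorem~\ref{persistence}) can be run \emph{before} Theorem~\ref{maintheoremsymbolic} and then immediately yields it. Your approach is shorter and avoids the external input from \cite{spmi}; the paper's approach, by contrast, produces the structural decomposition $Q(m)=Q(m_1)\cap\cdots\cap Q(m_r)$ along the way, which is of independent interest.
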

\noindent
Our proof requires Francisco, Mermim, and Schweig's \cite{qborel} characterization of the associated primes of 
$Q(m)$, and Cooper, Embree, H\`a, and Hoefel's \cite{spmi} 
description of the symbolic powers of monomial ideals.  
As a corollary, we obtain
results on the Waldschmidt constant, the symbolic defect,
and the resurgence (see Corollary \ref{symboliccor}).

The \emph{analytic spread} of $I$, denoted $\ell(I)$, is the 
Krull dimension of the ring 
\[\mathcal{F}(I) = \bigoplus_{i \geq 0} \frac{I^i}{{\bf m}I^i}
~~~~\mbox{where $I^0 = S$ and ${\bf m } =\langle x_1,\ldots,x_n\rangle$}.\]
For principal $Q$-Borel ideals, we obtain the following
formula for the analytic spread in terms of
combinatorics of $Q$.

\begin{theorem}[Theorem \ref{maintheorem1}]
Let $I=Q(m)$ be a principal $Q$-Borel ideal, let $A(m)$ be the order ideal generated by the support of $m$. Then
    $$\ell(I)=|A(m)|-K(A(m))+1$$
where $K(A(m))$ is the number of connected components in the subposet induced by $A(m)$.
\end{theorem}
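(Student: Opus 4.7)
The plan is to compute the analytic spread through the standard identification of the special fiber ring of a monomial ideal with a toric subalgebra. Since every $Q$-Borel move preserves total degree, the minimal generators $m = m_0, m_1, \ldots, m_r$ of $Q(m)$ all have the same degree, and $\mathcal{F}(I) \cong \mathbb{K}[m_0 t, m_1 t, \ldots, m_r t]$. Its Krull dimension equals one plus the dimension of the affine span over $\mathbb{Q}$ of the exponent vectors $\mathbf{a}_0, \ldots, \mathbf{a}_r$. Thus, setting $V = \mathrm{span}_{\mathbb{Q}}\{\mathbf{a}_i - \mathbf{a}_0 : i = 1,\ldots,r\}$, the theorem reduces to showing $\dim V = |A(m)| - K(A(m))$.

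For the upper bound, I would observe that a single $Q$-Borel move $x_i \to x_j$ applied to any generator $m'$ changes its exponent vector by $e_j - e_i$, where $x_j \leq_Q x_i$ and $x_i \in \mathrm{supp}(m') \subseteq A(m)$. Hence $x_i$ and $x_j$ lie in a common connected component of the subposet induced by $A(m)$. Since every minimal generator is obtained from $m$ by a finite sequence of such moves, $V$ is contained in the subspace of vectors supported on $A(m)$ whose coordinate sum on each connected component of $A(m)$ is zero, a subspace of dimension exactly $|A(m)| - K(A(m))$.

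For the lower bound, I would realize each $e_j - e_i$ with $x_j \leq_Q x_i$ and $x_i, x_j \in A(m)$ as an element of $V$. If $x_i \in \mathrm{supp}(m)$, then $m \cdot x_j/x_i$ is a minimal generator and its exponent minus $\mathbf{a}_0$ is exactly $e_j - e_i$. Otherwise $x_i \leq_Q x_\ell$ for some $x_\ell \in \mathrm{supp}(m)$ by definition of $A(m)$, and two successive Borel moves from $m$ produce the generators $m \cdot x_i/x_\ell$ and $m \cdot x_j/x_\ell$; the difference of their corresponding vectors in $V$ recovers $e_j - e_i$. Since each connected component of $A(m)$ is path-connected by comparable pairs, this gives $e_p - e_q \in V$ for every pair $x_p, x_q$ in the same component, and hence $\dim V \geq |A(m)| - K(A(m))$.

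The main subtlety will be verifying that the monomials produced in the lower-bound construction are genuine minimal generators of $Q(m)$ rather than merely elements of the ideal; this is automatic because every monomial reachable from $m$ by $Q$-Borel moves shares the degree of $m$, so no such monomial can properly divide another. Matching the two bounds then yields $\ell(I) = 1 + \dim V = |A(m)| - K(A(m)) + 1$.
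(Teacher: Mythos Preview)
Your proposal is correct and follows essentially the same strategy as the paper: both reduce $\ell(I)$ to the rank of the exponent matrix via Lemma~\ref{ranklemma} (equivalently, $1+\dim V$ for the affine span $V$), and both identify $V$ with the span of the difference vectors $e_j-e_i$ for comparable pairs in $A(m)$. The only cosmetic difference is that the paper finishes by recognizing this span as the column space of the incidence matrix of the Hasse diagram of $A(m)$ and citing a standard rank formula, whereas you compute $\dim V$ directly via the zero-sum-per-component subspace; this makes your argument marginally more self-contained but otherwise equivalent.
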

\noindent
Our proof uses the fact that for ideals generated by monomials of the same degree, the analytic spread is the rank of the matrix of exponent vectors of the generators.
The analytic spread of $Q(m)$ could
also be computed using results of Herzog,
Rauf, and Vladoiu \cite{HRV}, but our result highlights
the connection to the poset of $Q$.

Herzog, Rauf,  and Vladoiu's paper \cite{HRV} is used
to address the question of persistence of primes.  Precisely, we show that ${\rm ass}(I) = {\rm ass}(I^s)$ for all $s \geq 1$
for any principal $Q$-Borel ideal (see Theorem \ref{persistence}).  In fact, we give two different
proofs for this result.

We also consider powers of 
\emph{square-free principal $Q$-Borel ideals}, denoted
$sfQ(m)$. These square-free monomial ideals are
generated by the square-free monomial generators of
$Q(m)$.   For this class of ideals, we also compute
their analytic spread (see Theorem \ref{sqfanalyticspread}) in terms
of $Q$.

Our paper is structured as follows.
Section 2 is the background on monomial
ideals, posets, and (principal) $Q$-Borel ideals. In section 3 we prove Theorem \ref{maintheoremsymbolic}.  
In Section 4
we examine the persistence of primes problem.
Section 5 is devoted to the analytic spread of
(square-free) principal $Q$-Borel ideals.  

\medskip

\noindent
{\bf Acknowledgments.}  We thank Ashwini Bhat, Chris Francisco, and the two referees for their comments and improvements.
Camps is supported by Conacyt. Sarmiento's research is supported by SNI-Conacyt. Camps and Sarmiento are supported by PIFI IPN 20201016.
 Van Tuyl’s research is supported by NSERC Discovery Grant 2019-05412. 

\section{Background}

In this section we recall the relevant background and definitions.   

\subsection{Basics of monomial ideals and posets}
Given a monomial $m = x_1^{a_1}\cdots x_n^{a_n}$ in $S$, we may
write the monomial as
$m = x^\alpha$ with $\alpha = (a_1,\ldots,a_n)
\in \mathbb{N}^n$.  The monomial $m$ is a
\emph{square-free monomial} if $a_i = 0$ or $1$ for all $i=1,\ldots,n$.
The \emph{support} of 
$m = x_1^{a_1}\cdots x_n^{a_n}$ is the set 
${\rm supp}(m) = \{j ~|~ a_j > 0 \}$.

An ideal $I \subseteq S$ is a \emph{(square-free) monomial ideal} 
if $I$  is generated by (square-free) monomials.  A monomial ideal has a unique set of minimal monomial generators denoted by $G(I)$.

Let $Q$ be a poset on the ground set $\{x_1,\ldots,x_n\}$,
where the partial order is denoted by $<_Q$.  
A poset $Q'$ is an \emph{induced poset} of $Q$
if there exists an injective function $f:Q' \rightarrow Q$
such that $x \leq_{Q'} y$ if and only if $f(x) \leq_Q f(y)$.

Associated to any poset on a finite 
ground set is a \emph{Hasse diagram}.  In particular, 
the elements of $Q$ are represented by vertices, and
there exists a line segment from $x$ to $y$ in the
``upwards'' direction if $x <_Q y$ and if there 
is no other $z \in Q$ such
that $x <_Q z <_Q y$.  The Hasse diagram is an example
of directed acyclic graph (a directed graph with no directed
cycles).  Given a poset $Q$, the number of \emph{connected
components} of $Q$, denoted $K(Q)$, is the number of connected
components of the Hasse diagram, i.e., the connected components of the Hasse diagram when viewed as a undirected graph.

An \emph{order ideal} of $Q$ is a set $A \subseteq Q$ such that if $y \in A$ and if $x <_Q y$, then $x \in A$.   
Given any monomial $m = x_1^{a_1}\cdots x_{n}^{a_n} \in S$,
we can associate with $m$ the order ideal
\[A(m) = \left\{x_j ~\left|~ \mbox{there is an}~
x_i ~\mbox{such that} ~x_j \leq_Q x_i ~~\mbox{and}~~ x_i|m\right\}\right..\]
The order ideal $A(m)$ is an induced poset of $Q$ via
the inclusion map.   We say an order ideal
$A(m)$ is \emph{connected} if the Hasse diagram
of $A(m)$ is connected. The next lemma follows directly from
the definitions.

\begin{lemma}\label{supportlem1}
Fix a poset $Q$ on $\{x_1,\ldots,x_n\}$,
and let $m_1,m_2 \in S$ be two monomials.
If ${\rm supp}(m_1) = {\rm
supp}(m_2)$, then $A(m_1) = A(m_2)$.
\end{lemma}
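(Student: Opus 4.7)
The plan is to unwind the definition of $A(m)$ and observe that it depends only on which variables divide $m$, not on their exponents, so equality of supports immediately forces equality of the associated order ideals.

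More concretely, I will start by rewriting the condition $x_i \mid m$ in terms of the support: for any monomial $m = x_1^{a_1}\cdots x_n^{a_n}$, we have $x_i \mid m$ if and only if $a_i > 0$, i.e., if and only if $i \in \mathrm{supp}(m)$. Substituting this into the definition
\[A(m) = \{x_j \mid \text{there exists } x_i \text{ with } x_j \leq_Q x_i \text{ and } x_i \mid m\},\]
I obtain the equivalent reformulation $A(m) = \{x_j \mid \exists\, i \in \mathrm{supp}(m) \text{ with } x_j \leq_Q x_i\}$, which manifestly depends only on $\mathrm{supp}(m)$ and on the partial order $\leq_Q$.

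Given the hypothesis $\mathrm{supp}(m_1) = \mathrm{supp}(m_2)$, I would then show the two inclusions $A(m_1) \subseteq A(m_2)$ and $A(m_2) \subseteq A(m_1)$ by a symmetric argument: any $x_j \in A(m_1)$ arises from some $x_i$ with $i \in \mathrm{supp}(m_1) = \mathrm{supp}(m_2)$ and $x_j \leq_Q x_i$, so the same $x_i$ witnesses $x_j \in A(m_2)$. Since the argument is entirely definitional, there is no real obstacle; the only thing to be careful about is making explicit the (obvious) equivalence between divisibility $x_i \mid m$ and support membership, which is what lets the hypothesis about supports transfer to a statement about the generators of the order ideal.
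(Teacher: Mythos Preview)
Your proposal is correct and matches the paper's approach exactly: the paper simply states that the lemma ``follows directly from the definitions,'' and your argument is precisely the unwinding of those definitions.
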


The next lemma will be used in
future calculations.

\begin{lemma}\label{lemmaA}
    Fix a poset $Q$ on $\{x_1,\ldots, x_n\}$ and let $m_1,m_2\in S$ be two monomials. Then
    $$A(\mathrm{lcm}(m_1,m_2))=A(m_1m_2)=A(m_1)\cup A(m_2).$$
\end{lemma}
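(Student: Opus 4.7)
The plan is to split the chain of equalities into two independent checks and reduce each one to an elementary observation about supports of monomials.

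For the first equality $A(\mathrm{lcm}(m_1,m_2)) = A(m_1 m_2)$, I would appeal directly to the preceding Lemma \ref{supportlem1}, which says that $A(m)$ depends only on $\mathrm{supp}(m)$. The key observation is that for a single variable $x_i$, one has $x_i \mid m_1 m_2$ iff the exponent of $x_i$ in $m_1 m_2$ is positive, iff its exponent is positive in $m_1$ or in $m_2$, iff $x_i \mid \mathrm{lcm}(m_1,m_2)$. Hence
\[\mathrm{supp}(m_1 m_2) = \mathrm{supp}(m_1) \cup \mathrm{supp}(m_2) = \mathrm{supp}(\mathrm{lcm}(m_1,m_2)),\]
and Lemma \ref{supportlem1} immediately gives the first equality.

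For the second equality $A(m_1 m_2) = A(m_1) \cup A(m_2)$, I would proceed by double inclusion directly from the definition of $A(-)$. A variable $x_j$ lies in $A(m_1 m_2)$ iff there is some $x_i$ with $x_j \leq_Q x_i$ and $x_i \mid m_1 m_2$. By the same support observation, $x_i \mid m_1 m_2$ iff $x_i \mid m_1$ or $x_i \mid m_2$, which is equivalent to $x_j \in A(m_1) \cup A(m_2)$. The reverse inclusion is immediate since any witness $x_i$ for $x_j \in A(m_k)$ also satisfies $x_i \mid m_1 m_2$.

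There is no real obstacle here: both equalities are essentially definitional, and the entire argument rests on the single elementary fact that the support of a product of monomials coincides with both the support of their $\mathrm{lcm}$ and the union of their supports. The value of the lemma is that it lets later arguments freely interchange $\mathrm{lcm}(m_1,m_2)$, $m_1 m_2$, and the union $A(m_1) \cup A(m_2)$ when working with order ideals.
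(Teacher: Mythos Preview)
Your proposal is correct and is essentially the paper's own argument: the paper simply records the support identity $\mathrm{supp}(\mathrm{lcm}(m_1,m_2))=\mathrm{supp}(m_1m_2)=\mathrm{supp}(m_1)\cup\mathrm{supp}(m_2)$ and invokes Lemma~\ref{supportlem1}. You merely spell out the second equality $A(m_1m_2)=A(m_1)\cup A(m_2)$ more explicitly than the paper does, which is fine.
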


\begin{proof}
    Note that  $$\mathrm{supp}(\mathrm{lcm}(m_1,m_2))=\mathrm{supp}(m_1m_2)=\mathrm{supp}(m_1)\cup\mathrm{supp}(m_2).$$
Now apply Lemma \ref{supportlem1}.
\end{proof}

The next lemma shows the relationships between the components of a monomial and the order subideals of its order ideal.

\begin{lemma}\label{lemmaB}
Fix a poset $Q$ on $\{x_1,\ldots, x_n\}$
and let $m \in S$ be a monomial. For any order ideal $O\subset Q$ such that $O=A(m')$ for some $m'|m$, there is a unique monomial $m_O$ satisfying:
\begin{enumerate}
\item[$\bullet$] $m_O|m$.
\item[$\bullet$] $O=A(m_O)$.
\item[$\bullet$] For any other monomial $m''|m$ such that $O = A(m'')$, we have $m''|m_O$.
\end{enumerate}
\end{lemma}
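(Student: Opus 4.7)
The strategy is to exhibit $m_O$ explicitly as the largest divisor of $m$ whose support lies inside $O$, and then read off the three required properties. The point is that by Lemma~\ref{supportlem1} the order ideal $A(\cdot)$ depends only on the support, so the condition $A(m_O) = O$ is determined by $\mathrm{supp}(m_O)$ alone, while divisibility $m_O \mid m$ controls the allowed exponents.

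Concretely, write $m = x_1^{a_1}\cdots x_n^{a_n}$ and define
\[
m_O \;=\; \prod_{x_i \in O \cap \{x_j : a_j > 0\}} x_i^{a_i}.
\]
Divisibility $m_O \mid m$ is immediate from the definition. To verify $A(m_O) = O$, observe that $\mathrm{supp}(m_O) \subseteq O$ forces the downward $Q$-closure, which is $A(m_O)$, to lie inside $O$ since $O$ is an order ideal; and since $m' \mid m$ with $A(m') = O$, we have $\mathrm{supp}(m') \subseteq \mathrm{supp}(m)$ and $\mathrm{supp}(m') \subseteq O$, so $\mathrm{supp}(m') \subseteq \mathrm{supp}(m_O)$, giving $O = A(m') \subseteq A(m_O)$.

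For maximality, suppose $m'' \mid m$ with $A(m'') = O$. The key observation, and the step most worth being careful about, is that $\mathrm{supp}(m'') \subseteq O$: indeed, each $x_i \in \mathrm{supp}(m'')$ satisfies $x_i \leq_Q x_i$, hence $x_i \in A(m'') = O$. Combined with $m'' \mid m$, this gives that every variable appearing in $m''$ is one of the $x_i$ contributing to the product defining $m_O$, with exponent at most $a_i$, so $m'' \mid m_O$. Uniqueness of $m_O$ then follows formally: any other monomial $\widetilde m_O$ satisfying the three bullets must divide $m_O$ and be divided by $m_O$ (by applying the third bullet to each in turn), so $\widetilde m_O = m_O$.

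I do not expect a serious obstacle here; the only subtlety is remembering to use the order-ideal property of $O$ to conclude $A(m_O) \subseteq O$ from $\mathrm{supp}(m_O) \subseteq O$, and to extract $\mathrm{supp}(m'') \subseteq O$ from $A(m'') = O$ via the reflexivity of $\leq_Q$.
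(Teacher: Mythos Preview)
Your proof is correct. The argument differs from the paper's: the paper defines $m_O$ as the least common multiple of all monomials $m''$ with $m''\mid m$ and $A(m'')=O$, which makes the third bullet immediate but requires Lemma~\ref{lemmaA} to check that $A(m_O)=O$. You instead write down $m_O$ explicitly as the ``restriction'' of $m$ to the variables in $O$, which makes $m_O\mid m$ and the maximality property transparent via exponent comparison, and handles $A(m_O)=O$ by a direct support argument using that $O$ is an order ideal. Both routes are short; yours has the minor advantage of giving a closed formula for $m_O$ without invoking Lemma~\ref{lemmaA}, while the paper's lcm description is more intrinsic and makes the universal property self-evident.
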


\begin{proof}
    Define
    $$m_O=\mathrm{lcm}\left\{m''\ \left|~~~ \mbox{$m''$ a monomial,} ~~m''|m,\text{ and } A(m'')=O\right\}\right..$$
    The set on the right contains $m'$ so $m_O$ is well-defined and it is clear that $m_O|m$. From the last lemma, we have $A(m_O)=O$ and from the definition, if any other monomial $m''|m$ satisfies $A(m'')=O$, then we have $m''|m_O$.
\end{proof}

\subsection{$Q$-Borel ideals}
$Q$-Borel 
ideals were introduced by 
Francisco, Mermin, and Schweig \cite{qborel}
to generalize properties of
Borel monomial ideals.
 We recall this definition.

\begin{definition}\label{defn-qborel}
     Let $I \subseteq S$ be a monomial ideal and let $Q$
    be a poset on $\{x_1,\ldots,x_n\}$.  The ideal
    $I$ is a \emph{$Q$-Borel ideal} if whenever
    $x_j \leq_Q x_i$ and $x_i|m$ for some monomial $m \in I$,
    then $x_j \cdot (m/x_i) \in I$.   We say that
    $I$ is \emph{Borel with respect to $Q$}.
\end{definition}

\begin{remark}
    Definition \ref{defn-qborel} generalizes the notion
    of a Borel monomial ideal. More precisely, a
    $Q$-Borel ideal is a \emph{Borel ideal} if
    $Q$ is the chain
    $Q=C: x_1 <_Q x_2 <_Q < \cdots <_Q x_n$.
    Note that any monomial ideal $I$ is 
    a $Q$-Borel ideal if we take $Q$
    to be the anti-chain.
\end{remark}
If $x_i|m$ and $x_j \leq_Q x_i$, then we call $x_j \cdot (m/x_i)$ a \emph{$Q$-Borel move} of the monomial $m$.
It follows that a monomial ideal $I$ is a $Q$-Borel ideal if $I$ is closed under $Q$-Borel moves.  Observe that if $m = x^\alpha$, then a $Q$-Borel move
$x_j \cdot (m/x_i)$
corresponds to the existence of a vector $e_{(i,j)} \in \mathbb{N}^n$ whose $k$-th coordinate is given by
\begin{equation}\label{defn-eij}
(e_{(i,j)})_k
=\begin{cases}
1&k=j\text{ and }x_j\leq_Q x_i\\
-1&k=i\text{ and }x_j\leq_Q x_i\\
0&\text{otherwise}\end{cases}
\end{equation}
such that $x^{\alpha+e_{(i,j)}} = x_j\cdot (m/x_i)$.  The following
lemma shall be useful.

\begin{lemma}\label{basislemma}
Fix a poset $Q$ on $\{x_1,\ldots,x_n\}$.
Suppose that $x^\alpha$ and $
x^\beta$ are monomials
of $S$ such that $x^\beta$ can be obtained
via a series of $Q$-Borel moves
on $x^\alpha$.  Then there exists 
$e_{(i_1,j_1)},\ldots,e_{(i_l,j_l)}$,
not necessarily distinct, with
$i_t \in {\rm supp}(x^\alpha)$
for $t=1,\ldots,l$, such that
\[
\alpha+e_{(i_1,j_1)}+\cdots + e_{(i_l,j_l)} = \beta.\]
Equivalently, expressed in terms of monomials, we have
$$x^{\beta}=x^\alpha\cdot\frac{x_{j_1}\cdots x_{j_l}}{x_{i_1}\cdots x_{i_l}}$$
where $x_{i_t}$ divdes $x^\alpha$ for
$t=1,\ldots,l$.
\end{lemma}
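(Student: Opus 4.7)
The plan is to proceed by induction on $l$, the length of the given series of $Q$-Borel moves taking $x^\alpha$ to $x^\beta$. The base case $l=0$ is immediate since $\alpha=\beta$ and the empty sum suffices. For the inductive step, write the sequence as $x^\alpha = x^{\alpha_0} \to x^{\alpha_1} \to \cdots \to x^{\alpha_l} = x^\beta$ via moves $e_{(i_1,j_1)},\ldots,e_{(i_l,j_l)}$, and focus on the final move $e_{(i_l,j_l)}$.

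If $i_l\in\mathrm{supp}(x^\alpha)$, I would apply the inductive hypothesis to the initial segment of length $l-1$ (which is itself a valid series of $Q$-Borel moves from $x^\alpha$ to $x^{\alpha_{l-1}}$) to obtain a representation of $\alpha_{l-1}-\alpha$ of the desired form, and then append $e_{(i_l,j_l)}$. The substantive case is $i_l\notin\mathrm{supp}(x^\alpha)$. Since the $i_l$-coordinate of $\alpha$ is zero but $i_l\in\mathrm{supp}(x^{\alpha_{l-1}})$, some earlier move produced the particular copy of $x_{i_l}$ consumed at step $l$; I would select a step $s<l$ with $j_s=i_l$ realizing this. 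Validity of move $s$ gives $x_{i_l}=x_{j_s}\leq_Q x_{i_s}$, and combining this with $x_{j_l}\leq_Q x_{i_l}$ from move $l$ via transitivity yields $x_{j_l}\leq_Q x_{i_s}$. Thus $e_{(i_s,j_l)}$ is a legitimate Borel move vector, and replacing the two moves at positions $s$ and $l$ by the single move $e_{(i_s,j_l)}$ (inserted at position $s$) preserves the net change in the exponent vector while shortening the sequence by one.

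The main obstacle is verifying that the shortened sequence is still a valid series of $Q$-Borel moves from $x^\alpha$ to $x^\beta$, so that the inductive hypothesis may be invoked. After the modification, the monomial at each intermediate position $s,s+1,\ldots,l-1$ differs from the original by one fewer copy of $x_{i_l}$ and one more copy of $x_{j_l}$. By the choice of $s$, the specific $x_{i_l}$-copy in question is not consumed by any intermediate move $t$ with $s<t<l$; hence any such move with $i_t=i_l$ must consume a \emph{distinct} copy, so that the original monomial at time $t-1$ carries at least two copies of $x_{i_l}$. Consequently the modified monomial still contains an $x_{i_l}$-factor at that step, and the move remains applicable. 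Once this token-tracking check is complete, the shortened sequence has length $l-1$ and still reaches $x^\beta$, so induction produces the required representation of $\beta-\alpha$. The equivalent monomial reformulation in the lemma then follows by translating the vector identity $\beta=\alpha+\sum_t e_{(i_t,j_t)}$ back into multiplicative notation.
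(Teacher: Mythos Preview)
Your proof is correct and rests on the same core identity as the paper, namely $e_{(a,b)}+e_{(b,c)}=e_{(a,c)}$ together with transitivity of $\leq_Q$, but the two arguments are organized differently. The paper proceeds \emph{forward}: it takes the smallest index $t$ for which the divisor $x_{a_t}$ fails to lie in $\mathrm{supp}(x^\alpha)$, locates an earlier $s$ with $b_s=a_t$, merges $e_{(a_s,b_s)}+e_{(a_t,b_t)}$ into $e_{(a_s,b_t)}$, and repeats. Crucially, the paper never claims that the modified list of moves is itself a valid sequence of $Q$-Borel moves; it only maintains an \emph{algebraic} representation $\alpha_{t+1}=\alpha+\sum e_{(i_k,j_k)}$ with all $i_k\in\mathrm{supp}(x^\alpha)$, so no ``validity check'' on intermediate monomials is needed.

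Your approach instead inducts on the length $l$ and works \emph{backward} from the last move. Because your inductive hypothesis applies only to genuine $Q$-Borel sequences, you must verify that the shortened sequence (with the merged move $e_{(i_s,j_l)}$ at position $s$ and move $l$ deleted) is still a valid series of $Q$-Borel moves. Your token-tracking argument handles this correctly: the tracked copy of $x_{i_l}$ persists on the interval $[s,l-1]$, so any intermediate move with $i_t=i_l$ consumes a different copy, forcing $(\alpha_{t-1})_{i_l}\geq 2$ and hence $(\tilde\alpha_{t-1})_{i_l}\geq 1$. This is sound but adds a layer of bookkeeping that the paper's purely vector-based rewriting avoids. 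In short, both routes reach the same destination via the same merge trick; the paper's is a touch more economical, while yours is more explicitly combinatorial.
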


\begin{proof}
Because $x^\beta$ can be obtained
from $x^\alpha$ by $Q$-Borel moves,
there exists monomials $x^\alpha=x^{\alpha_1},x^{\alpha_2},\ldots,x^{\alpha_{r-1}},x^{\alpha_r} =x^{\beta}$ such
that $x^{\alpha_{t+1}}$ is obtained from
$x^{\alpha_{t}}$ via a $Q$-Borel move
for $t=1,\ldots,r-1$.
In particular, there
exists a vector
of the form
$e_{(a_t,b_t)}$ such that
\[\alpha_{t} +e_{(a_t,b_t)} = 
\alpha_{t+1} ~~\mbox{for each $t=1,\ldots,r-1$}\]
where $a_t \in {\rm supp}(x^{\alpha_t})$
and $x_{b_t} \leq_Q x_{a_t}$.  
Consequently,
\[\alpha + e_{(a_1,b_1)}+\cdots + e_{(a_{r-1},b_{r-1})} = \beta.\]
If $a_t \in {\rm supp}(x^\alpha)$ 
for all $t=1,\ldots,r-1$, then we are done.

On the other hand,
suppose that there is some
$e_{(a_t,b_t)}$ such that $a_t \not\in{\rm supp}(x^\alpha)$.  Let $t$ be the
smallest index such that
$a_t \not\in{\rm supp}(x^\alpha)$.  That is,
$t$ is the smallest index such that
$\alpha_{t+1}$ has not been expressed
in the form $\alpha+e_{(i_1,j_1)} + \cdots + 
e_{(i_t,j_t)}$ with all $i_k \in {\rm supp}(x^\alpha)$.
Note that $t \geq 2$ since $a_1
\in {\rm supp}(x^\alpha)$.   
Now
\[\alpha_{t+1}  = \alpha_t + e_{(a_t,b_t)} = (\alpha + e_{(a_1,b_1)}+\cdots+ e_{(a_{t-1},b_{t-1})})+ 
e_{(a_t,b_t)}.\]
Because $a_t$ is not in the support
of $x^\alpha$, but in the support
of $x^{\alpha_t}$, this means that
$a_t \in \{b_1,\ldots,b_{t-1}\}$
since the $b_k$'s correspond to the 
supports of the new
variables by which we multiply after dividing
by $a_k$.  Say $a_t = b_s$
with $s \in \{1,\ldots,t-1\}$. 
But then by equation
\eqref{defn-eij}
\[e_{(a_s,b_s)}+e_{(a_t,b_t)} = e_{(a_s,b_t)},\]
that is, the coordinate which is
$1$ in the first vector cancels
out with $-1$ in the second vector.  Furthermore,
 $b_t \leq_Q a_s$ since
$b_t \leq_Q a_t = b_s \leq _Q a_s$.
So, we can rewrite $\alpha_{t+1}$
as
\begin{eqnarray*}
\alpha_{t+1} &=&
\alpha + e_{(a_1,b_1)}+\cdots+
(e_{(a_s,b_s)}+e_{(a_t,b_t)})+ \cdots + e_{(a_{t-1},b_{t-1})}
\\
&=& \alpha +e_{(a_1,b_1)}+\cdots+ e_{(a_s,b_t)}
+ \cdots + e_{(a_{t-1},b_{t-1})}
\end{eqnarray*}
where all the $a_k$'s are in the ${\rm supp}(x^\alpha)$.
So $\alpha_{t+1}$ has the desired form.  

Repeating this
process allows 
$\beta$ to be expressed in the desired form.
\end{proof}

Because $Q$-Borel ideals are closed under $Q$-Borel
moves, the generators of $Q$-Borel ideals can be described 
as subsets of monomials of $S$ from which other monomial
generators in the ideal can be obtained via $Q$-Borel moves.  
The following terminology shall be helpful.

\begin{definition}\label{defn-qborel-gens}
Let $X$ be a subset of monomials of $S$. 
The smallest $Q$-Borel ideal $I$ that contains $X$ is 
denoted $Q(X)$, and we say $X$ is a \emph{$Q$-Borel 
generating set} of $I = Q(X)$.  
A square-free monomial ideal $J$
is a \emph{square-free $Q$-Borel ideal} if 
it is generated by the square-free monomials
of a $Q$-Borel ideal.  Given a set $Y$
of square-free monomials, we
let $sfQ(Y)$ denote the smallest square-free
$Q$-Borel ideal containing $Y$.
\end{definition}

The following fact follows directly
from the definitions.

\begin{lemma}{\cite[Proposition 2.6]{qborel}}\label{degree-gens}
If all the monomials of $X$ have the 
same degree, then all the minimal
generators of the $Q$-Borel ideal
$I =Q(X)$ have the same degree.
\end{lemma}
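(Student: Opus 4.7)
The plan is to observe that a $Q$-Borel move is degree-preserving and then track what this implies for the minimal generating set of $Q(X)$.

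First I would note that if $m$ is a monomial and $x_j \cdot (m/x_i)$ is a $Q$-Borel move of $m$, then $\deg(x_j \cdot (m/x_i)) = \deg(m)$, since we multiply by one variable and divide by one variable. So the set $\widetilde{X}$ consisting of all monomials obtainable from $X$ by a (possibly empty) sequence of $Q$-Borel moves is contained in the degree-$d$ part of $S$, where $d$ is the common degree of the monomials in $X$.

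Next, since $\widetilde{X}$ is by construction closed under $Q$-Borel moves, the monomial ideal $\langle \widetilde{X}\rangle$ is $Q$-Borel and contains $X$; since any $Q$-Borel ideal containing $X$ must contain every monomial obtained from $X$ by $Q$-Borel moves, we have $Q(X) = \langle \widetilde{X}\rangle$. The minimal monomial generating set $G(Q(X))$ is therefore the set of divisibility-minimal elements of $\widetilde{X}$.

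Finally, the key observation is that any two distinct monomials of the same degree are incomparable under divisibility: if $m_1 \mid m_2$ and $\deg(m_1) = \deg(m_2)$, then $m_1 = m_2$. Since all elements of $\widetilde{X}$ have degree $d$, every element of $\widetilde{X}$ is divisibility-minimal, so $G(Q(X)) = \widetilde{X}$, and every minimal generator has degree $d$.

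There is no real obstacle here; the only point to be careful about is distinguishing the $Q$-Borel closure $\widetilde{X}$ of $X$ from the ideal $Q(X)$ it generates, and verifying that for a degree-homogeneous $\widetilde{X}$ the two notions of ``generators'' coincide via the antichain-under-divisibility argument above.
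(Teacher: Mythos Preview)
Your argument is correct and is exactly the ``follows directly from the definitions'' elaboration the paper alludes to (the paper does not give its own proof, only cites \cite[Proposition 2.6]{qborel}). The one step worth making explicit is why $\langle \widetilde{X}\rangle$ is $Q$-Borel: the definition requires closure under $Q$-Borel moves for \emph{every} monomial in the ideal, not just the generators, but this follows since any monomial in $\langle \widetilde{X}\rangle$ has the form $n\cdot g$ with $g\in\widetilde{X}$, and if $x_i\mid ng$ then either $x_i\mid g$ (apply the move to $g$) or $x_i\mid n$ (the result is still a multiple of $g$).
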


\subsection{$Q$-Borel principal ideals}
We are primarily interested in the 
following ideals.

\begin{definition}\label{defn-principal}
If $X = \{m\}$ contains
a single monomial, then we call $I = Q(X)$ 
a \emph{$Q$-Borel principal
ideal}, and we abuse notation and write $I =Q(m)$.
Similarly, if $Y = \{m\}$ contains a 
single square-free monomial, then we call
$I = sfQ(Y)$ a \emph{square-free $Q$-Borel principal
ideal} and write $I = sfQ(m)$.
\end{definition}

Principal $Q$-Borel ideals
are preserved under ideal multiplication.

\begin{lemma}\label{lem-principalproducts}
Fix a poset $Q$ on $\{x_1,\ldots,x_n\}$,
and let $m_1,m_2 \in S$ be two monomials.
Then
\[Q(m_1)Q(m_2) = Q(m_1m_2).\]
\end{lemma}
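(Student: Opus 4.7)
The plan is to prove the equality by showing the two inclusions separately, exploiting the fact that principal $Q$-Borel ideals are generated by the Borel orbit of a single monomial, while the product $Q(m_1)Q(m_2)$ is generated by products of such orbit elements.

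For the inclusion $Q(m_1)Q(m_2) \subseteq Q(m_1m_2)$, I would take a generator $fg$ with $f \in G(Q(m_1))$ and $g \in G(Q(m_2))$ and show it is a $Q$-Borel descendant of $m_1m_2$. The key observation is that any single Borel move turning $m_1$ into $x_j m_1/x_i$ (with $x_i \mid m_1$ and $x_j \leq_Q x_i$) lifts automatically to a Borel move on $m_1m_2$, since $x_i \mid m_1$ implies $x_i \mid m_1m_2$; the result of the move is $f' m_2$ where $f'$ is the result of the move on $m_1$. Iterating the sequence of moves taking $m_1$ to $f$ produces $fm_2$ from $m_1m_2$, and then iterating the moves taking $m_2$ to $g$ (which use variables dividing $m_2$, hence dividing $fm_2$) produces $fg$. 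Thus $fg \in Q(m_1m_2)$.

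For the reverse inclusion, I would verify that $Q(m_1)Q(m_2)$ is itself a $Q$-Borel ideal containing $m_1m_2$, from which $Q(m_1m_2) \subseteq Q(m_1)Q(m_2)$ is automatic by minimality. Containment of $m_1m_2$ is clear. To check the Borel property, let $M \in Q(m_1)Q(m_2)$ be a monomial, so $M = u\cdot fg$ for some monomial $u$, $f \in G(Q(m_1))$, and $g \in G(Q(m_2))$ (since any monomial in a monomial ideal is a multiple of some generator). Given a Borel move with $x_i \mid M$ and $x_j \leq_Q x_i$, I split into cases according to whether $x_i$ divides $u$, $f$, or $g$: in each case, the move is absorbed by the corresponding factor, using the Borel property of $Q(m_1)$ or $Q(m_2)$ when $x_i$ divides $f$ or $g$, to produce a new factorization of $x_j M/x_i$ of the same form.

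The main (minor) obstacle is handling the case distinction cleanly in the second inclusion, since when $x_i$ divides several of $u, f, g$ one must make a choice, and one must be careful that the resulting $f' = x_j f/x_i$ (respectively $g'$) still lies in $Q(m_1)$ (respectively $Q(m_2)$) as an element, not just as a generator; this follows because $Q(m_i)$ is itself a $Q$-Borel ideal. No appeal to Lemma \ref{basislemma} is strictly necessary, though it could provide an alternative route to the first inclusion by directly expressing every generator of $Q(m_1m_2)$ as a product of generators of $Q(m_1)$ and $Q(m_2)$ via a partition of the multiplier indices $i_t$ according to whether $x_{i_t}$ divides $m_1$ or $m_2$.
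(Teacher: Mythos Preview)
Your argument is correct. The first inclusion $Q(m_1)Q(m_2)\subseteq Q(m_1m_2)$ is handled exactly as in the paper: lift the sequence of Borel moves on each factor to moves on the product.

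For the reverse inclusion your route genuinely differs from the paper's. The paper invokes Lemma~\ref{basislemma} to write an arbitrary generator $p$ of $Q(m_1m_2)$ as $m_1m_2$ times a ratio whose denominator variables all lie in $\mathrm{supp}(m_1m_2)$, then sorts these denominator variables according to whether they lie in $\mathrm{supp}(m_1)\setminus\mathrm{supp}(m_2)$, $\mathrm{supp}(m_2)\setminus\mathrm{supp}(m_1)$, or the common support; the shared ones are split between the two factors using that their product divides $\gcd(m_1,m_2)^2$. Your argument instead shows directly that $Q(m_1)Q(m_2)$ is a $Q$-Borel ideal (via the three-case analysis on $u$, $f$, $g$) and then appeals to the minimality built into Definition~\ref{defn-qborel-gens}. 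This is cleaner and entirely avoids Lemma~\ref{basislemma} and the index bookkeeping; the paper's approach, on the other hand, has the merit of producing an \emph{explicit} factorization of each generator of $Q(m_1m_2)$ as a product of generators of $Q(m_1)$ and $Q(m_2)$, which your argument does not. Your closing remark about partitioning the $i_t$'s is precisely the paper's strategy, so you have in fact anticipated both proofs.
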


\begin{proof}
Let $p_1 \in Q(m_1)$, respectively
$p_2 \in Q(m_2)$, be any
monomial generator of $Q(m_1)$, respectively $Q(m_2)$.
So $p_1$ is a $Q$-Borel move of $m_1$, and
similarly for $p_2$ and $m_2$. Thus
\[p_1 = m_1\frac{x_{j_1}x_{j_2}\cdots x_{j_r}}{x_{i_1}x_{i_2}\cdots x_{i_r}} \mbox{ with $x_{j_\ell} <_Q x_{i_\ell}$ for
$\ell =1,\ldots,r$}\]
and
\[p_2 = m_2\frac{x_{b_1}x_{b_2}\cdots x_{b_s}}{x_{a_1}x_{a_2}\cdots x_{a_s}} \mbox{ with $x_{b_\ell} <_Q x_{a_\ell}$ for
$\ell =1,\ldots,s$}.\]
But this means that 
\[p_1p_2 = m_1m_2\frac{x_{j_1}x_{j_2}\cdots x_{j_r}}{x_{i_1}x_{i_2}\cdots x_{i_r}}\frac{x_{b_1}x_{b_2}\cdots x_{b_s}}{x_{a_1}x_{a_2}\cdots x_{a_s}}\]
is a $Q$-Borel move of $m_1m_2$, so $p_1p_2 \in Q(m_1m_2)$, thus showing $Q(m_1)Q(m_2) \subseteq Q(m_1m_2)$.

For the reverse containment, if $p \in Q(m_1m_2)$
is a generator of $Q(m_1m_2)$ obtained via a series
of $Q$-Borel moves on $m_1m_2$. So, by Lemma \ref{basislemma} and Lemma \ref{lemmaA}, we have
\[p = m_1m_2\frac{x_{j_1}x_{j_2}\cdots x_{j_r}}{x_{i_1}x_{i_2}\cdots x_{i_r}}\frac{x_{b_1}x_{b_2}\cdots x_{b_s}}{x_{a_1}x_{a_2}\cdots x_{a_s}}
\frac{x_{c_1}x_{c_2}\cdots x_{c_t}}{x_{d_1}x_{d_2}\cdots x_{d_t}}
\]
where $x_{j_\ell}<_Q x_{i_\ell}$, $x_{b_\ell}<_Q x_{a_\ell}$, $x_{c_\ell}<_Q x_{d_\ell}$ and ${i_\ell}\in\mathrm{supp}(m_1)\setminus\mathrm{supp}(m_2)$,  ${a_\ell}\in\mathrm{supp}(m_2)\setminus\mathrm{supp}(m_1)$ and ${d_\ell}\in\mathrm{supp}(m_1)\cap\mathrm{supp}(m_2)$ for all relevant $\ell$. Since $x_{d_1}\cdots x_{d_t}|(\mathrm{gcd}(m_1,m_2))^2$, we can re-index, if necessary, so that for some $1\leq t'\leq t-1$ we have 
$$x_{d_1}\cdots x_{d_{t'}}|{\rm gcd}(m_1,m_2)
~\mbox{and}~  x_{d_{t'+1}}\cdots x_{d_t}|\mathrm{gcd}(m_1,m_2).$$
We then have
$$\left(m_1\frac{x_{j_1}\cdots x_{j_r}}{x_{i_1}\cdots x_{i_r}}\frac{x_{c_1}\cdots x_{c_{t'}}}{x_{d_1}\cdots x_{d_{t'}}}\right)\in Q(m_1),\ \left(m_2\frac{x_{b_1}\cdots x_{b_s}}{x_{a_1}\cdots x_{a_s}}\frac{x_{c_{t'+1}}\cdots x_{c_t}}{x_{d_{t'+1}}\cdots x_{d_t}}\right)\in Q(m_2)$$

\noindent implying that $p\in Q(m_1)Q(m_2)$. Therefore, $Q(m_1m_2)\subseteq Q(m_1)Q(m_2)$ and we have the conclusion.
\end{proof}

We also have the following property of ideal intersections.

\begin{lemma}\label{intersections}
Fix a poset $Q$ on $\{x_1,\ldots,x_n\}$,
and let $m_1,m_2 \in S$ be two monomials.
If $A(m_1) \cap A(m_2) = \emptyset$, then
$Q(m_1) \cap Q(m_2) = Q(m_1)Q(m_2).$
\end{lemma}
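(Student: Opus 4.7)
My plan is to prove the two containments separately, with the easy one being $Q(m_1)Q(m_2) \subseteq Q(m_1) \cap Q(m_2)$. This follows from the general fact that $IJ \subseteq I \cap J$ for any two ideals; it requires no hypothesis on the supports.

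For the reverse containment $Q(m_1) \cap Q(m_2) \subseteq Q(m_1)Q(m_2)$, I would pick an arbitrary monomial $p \in Q(m_1) \cap Q(m_2)$ and show $p \in Q(m_1)Q(m_2)$. Since $p$ lies in each principal $Q$-Borel ideal, there exist minimal monomial generators $g_1 \in G(Q(m_1))$ and $g_2 \in G(Q(m_2))$ with $g_1 \mid p$ and $g_2 \mid p$. By Lemma \ref{basislemma}, we can write
\[
g_1 = m_1 \cdot \frac{x_{j_1}\cdots x_{j_r}}{x_{i_1}\cdots x_{i_r}},
\qquad
g_2 = m_2 \cdot \frac{x_{b_1}\cdots x_{b_s}}{x_{a_1}\cdots x_{a_s}},
\]
where $x_{j_\ell} \leq_Q x_{i_\ell}$, $x_{i_\ell} \mid m_1$, and $x_{b_\ell} \leq_Q x_{a_\ell}$, $x_{a_\ell} \mid m_2$.

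The key observation is that $\mathrm{supp}(g_i) \subseteq A(m_i)$ for $i=1,2$. Indeed, every variable appearing in $g_1$ is either already in $\mathrm{supp}(m_1) \subseteq A(m_1)$, or is some $x_{j_\ell}$ with $x_{j_\ell} \leq_Q x_{i_\ell}$ where $x_{i_\ell} \in \mathrm{supp}(m_1) \subseteq A(m_1)$; since $A(m_1)$ is an order ideal, $x_{j_\ell} \in A(m_1)$ as well. The same argument gives $\mathrm{supp}(g_2) \subseteq A(m_2)$.

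Now the hypothesis $A(m_1) \cap A(m_2) = \emptyset$ forces $\mathrm{supp}(g_1) \cap \mathrm{supp}(g_2) = \emptyset$, so $\gcd(g_1,g_2)=1$ and hence $g_1 g_2 = \mathrm{lcm}(g_1,g_2)$. Since $g_1 \mid p$ and $g_2 \mid p$, this lcm divides $p$, so $g_1 g_2 \mid p$. Because $g_1 g_2 \in Q(m_1)Q(m_2)$ and this ideal is closed under multiplication by monomials, $p \in Q(m_1)Q(m_2)$, completing the proof. There is no real obstacle here; the only subtlety is noticing that the disjoint-order-ideal hypothesis upgrades ``$g_1$ and $g_2$ each divide $p$'' to ``$g_1 g_2$ divides $p$'' via coprimality.
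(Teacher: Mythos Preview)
Your proof is correct and follows essentially the same approach as the paper: both hinge on the observation that $\mathrm{supp}(g) \subseteq A(m_i)$ for every minimal generator $g$ of $Q(m_i)$, so that the disjointness hypothesis forces generators from the two ideals to be coprime and hence $\mathrm{lcm}(g_1,g_2)=g_1g_2$. The only cosmetic difference is that the paper packages the argument via the standard formula $I\cap J=\langle \mathrm{lcm}(m',m'')\rangle$ for monomial ideals, whereas you work element-by-element with an arbitrary $p$; the content is the same.
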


\begin{proof}
It suffices to show that $Q(m_1) \cap Q(m_2) \subseteq
Q(m_1)Q(m_2)$. 

Note that for any monomial $m$, if $p \in G(Q(m))$ is
a minimal generator of $Q(m)$, then 
$\{x_j ~|~ j \in {\rm supp}(p)\} \subseteq A(m)$.  
In fact, we have
$$A(m) = \bigcup_{p\in G(Q(m))} \{x_j ~|~ j \in {\rm supp}(p)\}.$$
That is, $A(m)$ is precisely the set of variables
that divide at least one minimal generator of $Q(m)$.

Because $A(m_1)$ and $A(m_2)$ are disjoint, this
implies that for any $Q$-Borel movement $m'$ of $m_1$ and any $Q$-Borel movement $m''$ of $m_2$, 
$\mathrm{gcd}(m',m'')=1,$ and thus
${\rm lcm}(m',m'') =m'm''$. It then follows that
\begin{eqnarray*}
Q(m_1) \cap Q(m_2) &= &
\langle {\rm lcm}(m',m'') = m'm'' ~|~
m' \in G(Q(m_1)) ~\mbox{and}~ m'' \in G(Q(m_2)) \rangle \\
& = &
Q(m_1)Q(m_2),
\end{eqnarray*}
as desired.
\end{proof}

As first shown by Francisco, {\it et al} \cite{qborel},
the associated primes of principal $Q$-Borel
ideals are related to order ideals of $Q$.  Recall
that for any ideal $I \subseteq S$, a prime
ideal $P$  is an \emph{associated prime}
of $I$ if there exists an element $f \in S$
such that 
\[I:\langle f \rangle = \{g \in S ~|~ gf \in I\} = P.\]
We denote the set of all associated primes
of $I$ by ${\rm ass}(I)$.  We then have:

\begin{theorem}{\cite[Theorem 4.3]{qborel}}
\label{assprimechar}
Let $I = Q(m)$ for some monomial $m$ and
poset $Q$.  Then
$P \in {\rm ass}(I)$ if and only if 
\[P = \langle x_i ~|~ x_i \in A(m')\rangle\] 
for some $m'|m$  with the property that $A(m')$
is connected.
\end{theorem}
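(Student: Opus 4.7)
The plan is to prove both directions of the equivalence using the standard criterion that, for a monomial ideal $I$, $P \in \mathrm{ass}(I)$ if and only if $P = I : u$ for some monomial $u$, and leveraging the $Q$-Borel closure of $I = Q(m)$.

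For the forward direction, suppose $P = \langle x_i : i \in T\rangle = I:u$. First I would argue $T \subseteq A(m)$: a variable $x_i \notin A(m)$ cannot divide any generator of $I$, so $x_i u \in I$ would force $u \in I$ itself, contradicting primality. Next, $T$ is an order ideal of $Q$: $x_i u \in I$ and $x_j \leq_Q x_i$ give $x_j u = x_j(x_iu)/x_i \in I$ by the $Q$-Borel property, so $x_j \in T$. The key step is to show every maximal element of $T$ lies in $\mathrm{supp}(m)$: if some maximal $x_i \in T$ is not in $\mathrm{supp}(m)$, pick a generator $g$ of $I$ with $g \mid x_i u$, and expand $g = m \cdot (x_{j_1}\cdots x_{j_l})/(x_{i_1}\cdots x_{i_l})$ via Lemma~\ref{basislemma}; then $x_i \mid g$ with $x_i \notin \mathrm{supp}(m)$ forces some $x_{j_s} = x_i$ with $x_{i_s} \in \mathrm{supp}(m)$ and $x_i \leq_Q x_{i_s}$. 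Setting $g' := g \cdot x_{i_s}/x_i$ gives a shorter $Q$-Borel move from $m$ (hence $g' \in I$) satisfying $g' \mid x_{i_s} u$, so $x_{i_s} \in T$, contradicting the maximality of $x_i$. Thus $T^{\max} \subseteq \mathrm{supp}(m)$ and $T = A(m')$ for $m' = \prod_{x_i \in T^{\max}} x_i^{a_i}$ (with $a_i$ the $m$-exponent of $x_i$), which divides $m$. Finally, connectedness of $T$ follows by contrapositive: if $T = T_1 \sqcup T_2$ splits into disjoint order-ideal components with no comparabilities between them, factor $m' = m'_1 m'_2$ with $A(m'_j) = T_j$ and apply Lemma~\ref{intersections} to get $Q(m') = Q(m'_1) \cap Q(m'_2)$; a direct colon-ideal comparison then exhibits a variable outside $T$ lying in $I:u$, contradicting $I:u = P$.

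For the backward direction, fix $m' \mid m$ with $O = A(m')$ connected and set $P_O = \langle x_i : x_i \in O\rangle$. I would construct a monomial $u$ with $I:u = P_O$. A natural first candidate is $u = m/m_O$, where $m_O$ is the maximal divisor of $m$ with $A(m_O) = O$ (Lemma~\ref{lemmaB}); small examples show that when $m_O = m$ this must be augmented by multiplying by a carefully chosen element of $O$. The verifications split into: (i) $x_i u \in I$ for every $x_i \in O$, by exhibiting an explicit sequence of $Q$-Borel moves from $m$ to a monomial dividing $x_i u$, with connectedness of $O$ ensuring the moves can stay within $O$; and (ii) $x_j u \notin I$ for every $x_j \notin O$, by using Lemma~\ref{basislemma} to rule out any $Q$-Borel sequence from $m$ that would produce a divisor of $x_j u$.

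The main obstacle is the backward direction's witness construction, particularly when $m_O = m$: there $m/m_O = 1$ is unusable, and $u$ must instead come from within $O$, with the precise choice (a single minimal element, a product of minimals, or something else) dictated by the structure of $Q$ restricted to $O$. Connectedness of $O$ plays a double role: it allows $Q$-Borel moves from $m$ to reach $x_i u$ for each $x_i \in O$, and it blocks extraneous variables $x_j \notin O$ from entering $I:u$. In the forward direction, the delicate step is the shorter-move argument forcing $T^{\max} \subseteq \mathrm{supp}(m)$; the connectedness portion, while plausible via Lemma~\ref{intersections}, still requires care to produce the specific variable outside $T$ that violates $I:u = P$.
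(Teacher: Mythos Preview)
The paper does not provide its own proof of this statement. Theorem~\ref{assprimechar} is quoted from \cite[Theorem~4.3]{qborel} and is used as a black box throughout the paper; the remark immediately following it points out that the primary decomposition of $Q(m)$ could alternatively be obtained from \cite{CH}, but no argument is given here. So there is nothing in this paper to compare your proposal against.

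As to your sketch itself: the forward direction is mostly sound up to the point where you argue connectedness. There you invoke Lemma~\ref{intersections} for $Q(m')$, but the colon ideal $I:u$ is computed in $I=Q(m)$, not in $Q(m')$; you would need to explain how a splitting $A(m')=T_1\sqcup T_2$ interacts with the full ideal $Q(m)$ to produce an extra variable in $I:u$. The ``shorter-move'' step also deserves care: you set $g'=g\cdot x_{i_s}/x_i$ and assert $g'\in I$ because it uses one fewer move from $m$, but removing the $s$-th move from the expression in Lemma~\ref{basislemma} does not automatically yield an element of $Q(m)$, since Lemma~\ref{basislemma} only guarantees the existence of \emph{some} sequence of moves with $i_t\in\mathrm{supp}(m)$, not that every truncation of that sequence is again a valid monomial in $I$. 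In the backward direction you are candid that the witness construction is the crux and leave it open; this is indeed where the work lies in \cite{qborel}, and your proposal does not yet supply it.

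If you want to see how the original proof goes, consult \cite{qborel} directly; alternatively, the transversal-polymatroidal description (Lemma~\ref{transversal} here, together with \cite{HRV} or \cite{CH}) gives a different route to the associated primes that this paper alludes to but does not carry out.
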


\begin{remark}
As we will see in Section 4, principal $Q$-Borel
ideals are products of prime monomial ideals, that
is, all principal $Q$-Borel ideals are examples
of ideals that are products of ideals generated by 
linear forms.  There are a number of
papers on this topic, for example \cite{CH, CT}.

In particular, the primary decomposition of principal
$Q$-Borel ideals can also be deduced 
from the work of \cite{CH}. We use the statement of
\cite{qborel} since it relates the 
associated primes directly to the 
Hasse diagram of $Q$,
\end{remark}
\begin{example}
\label{aspi}\normalfont 
We illustrate some of the above ideas with the following
example.
Let $S=\mathbb{K}[x_1,\ldots,x_{11}]$ and let $Q$ be the poset on $\{x_1,\ldots,x_{11}\}$ with Hasse diagram:
\begin{center}
\begin{tikzpicture}[scale=1, vertices/.style={draw, fill=black, circle, minimum size=3pt, inner sep=0pt}]
               \node [vertices, label=right:{${x}_{1}$}] (0) at (-3+0,0){};
               \node [vertices, label=right:{${x}_{3}$}] (4) at (-3+3+0.5,2-0.5){};
               \node [vertices, label=right:{${x}_{6}$}] (5) at (-3+3+1,0){};
               \node [vertices, label=right:{${x}_{7}$}] (7) at (-3+4.5+1,0){};
               \node [vertices, label=right:{${x}_{8}$}] (8) at (-3+6+1,0){};
               \node [vertices, label=right:{${x}_{2}$}] (1) at (-2.25+0,1.33333){};
               \node [vertices, label=right:{${x}_{4}$}] (2) at (-2.25+1.5,1.33333){};
               \node [vertices, label=right:{${x}_{9}$}] (6) at (-2.25+3+1,1.33333){};
               \node [vertices, label=right:{${x}_{10}$}] (9) at (-2.25+4.5+1,1.33333){};
               \node [vertices, label=right:{${x}_{5}$}] (3) at (-.75+0,2.66667){};
               \node [vertices, label=right:{${x}_{11}$}] (10) at (-.75+1.5+1,2.66667){};
       \foreach \to/\from in {0/1, 0/2, 1/3, 2/3, 4/3, 5/6, 6/10, 7/6, 8/9, 9/10}
       \draw [-] (\to)--(\from);
       \end{tikzpicture}
      \end{center}
      In the above drawing, $x_i <_Q x_j$ if there is a path
      from $x_i$ to $x_j$ such that the path from $x_i$ to $x_j$
      only moves ``upward".  For example
      $x_1 <_Q x_5$, but $x_1$ and $x_3$ are not comparable.
      
      If we consider the monomial $m =x_4x_9^2$, then since
      $x_1 <_Q x_4$ and $x_4|m$, the monomial $x_1\cdot (m/x_4) = x_1x_9^2$ is a $Q$-Borel move of $m$.
      The $Q$-Borel principal ideal $I=Q(x_4x_{9}^2)$
      is the monomial ideal generated by all the
      $Q$-Borel moves one can obtain from $x_4x_9^2$.  In particular,
      \small
      \[Q(x_4x_9^2) = \langle 
      x_1x_6^2,x_1x_6x_7,x_1x_7^2,
      x_1x_6x_9,x_1x_7x_9,
      x_1x_9^2,
      x_4x_6^2,x_4x_6x_7,x_4x_7^2,
      x_4x_6x_9,x_4x_7x_9,
      x_4x_9^2\rangle.\]
      \normalsize
      Observe that all the generators of $Q(x_4x_9^2)$ have degree three,
      as expected by Lemma \ref{degree-gens}.   
      
      We apply Theorem \ref{assprimechar} to compute 
      ${\rm ass}(Q(x_4x_9^2))$.  The monomials that divide 
      $x_4x_9^2$ are $x_4,x_9,x_9^2,x_4x_9$ and $x_4x_9^2$.  Now $A(x_4x_9) = A(x_4x_9^2) = \{x_1,x_4,x_6,x_7,x_9\}$ is not connected, but
      the order ideals $A(x_4) = \{x_1,x_4\}$ and
      $A(x_9) = A(x_9^2) =\{x_9,x_6,x_7\}$ are.  So
      \[{\rm ass}(Q(x_4x_9^2)) = \{\langle x_1,x_4\rangle,\langle x_6,x_7,x_9 \rangle\}.\]
      \end{example}


\section{The ideal containment problem for $Q(m)$}\label{sect.containmentproblem}

The \emph{$d$-th symbolic power} of an ideal 
$I \subseteq S$, denoted $I^{(d)}$, is the ideal
\[I^{(d)} = \bigcap_{P \in {\rm ass}(I)} (I^dS_P \cap S) \]
where $S_P$ is the ring $S$ localized at the ideal
$P$, and the intersection is over the set of all
the associated primes of $I$.  
(The definition of symbolic powers is not uniform
in the literature, where in some references, the indexing set is only over the minimal associated primes, as in \cite[Definition 4.3.22]{V}.)

The regular $d$-th power of $I$, that is $I^d$, always 
satisfies $I^d \subseteq I^{(d)}$.  
Ein-Lazersfeld-Smith
\cite{ELS} and 
Hochster-Huneke \cite{HH} showed
that, for every positive integer $d$, there is an integer $r \geq d$ such that $I^{(r)} \subseteq  I^d$. 
The ``ideal containment problem'' pertains to the 
problem of determining, for each positive
integer $d$, the smallest integer
$r$ such that $I^{(r)} \subseteq I^d$.  In this
section, we show that for any principal $Q$-Borel
ideal, we can take $r=d$.

The following
results of Cooper, Embree, H\`a, and Hoefel \cite{spmi} about symbolic powers of monomial ideals
will
be useful. If $I = Q_1 \cap \cdots \cap Q_s$ is a  
primary decomposition of the monomial ideal $I$, and
if $P \in {\rm ass}(I)$, then we define
\[Q_{\subseteq P} = \bigcap_{\sqrt{Q_i} \subseteq P} Q_i.\]
That is, $Q_{\subseteq P}$ is 
the intersection of all the primary ideals in the primary decomposition
of $I$ such that $\sqrt{Q_i}$ is contained in $P$.  Then
we have:

\begin{theorem}{\cite[Theorem 3.7]{spmi}}\label{monomialsymbolic}
The $d$-th symbolic power of a monomial ideal $I$ is 
\[I^{(d)} = \bigcap_{P \in {\rm maxass}(I)} Q_{\subseteq P}^d\]
where ${\rm maxass}(I)$ denotes the maximal associated primes of $I$, ordered by inclusion.  
 \end{theorem}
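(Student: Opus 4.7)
The plan is to unwind the definition $I^{(d)} = \bigcap_{P \in \mathrm{ass}(I)}(I^d S_P \cap S)$, reduce each factor to a concrete monomial computation using the fact that $I$ has a monomial primary decomposition, and then collapse the intersection to just the maximal associated primes.

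The first step is to prove the key localization identity for any monomial ideal $J$ with irredundant monomial primary decomposition $J = Q_1 \cap \cdots \cap Q_s$ and any monomial prime $P$:
\[
JS_P \cap S = \bigcap_{\sqrt{Q_i} \subseteq P} Q_i.
\]
Localization commutes with finite intersection, so $JS_P = \bigcap_i Q_i S_P$. For $\sqrt{Q_j} \not\subseteq P$, the ideal $Q_j$ contains a pure power of some variable outside $P$, so $Q_j S_P = S_P$ and that factor drops out. For $\sqrt{Q_i} \subseteq P$, the ideal $Q_i$ is generated by pure powers of variables in $P$, so it lives in the subring on the variables of $P$; a direct check shows that the only monomials of $S$ that lie in $Q_i S_P$ are the ones already in $Q_i$. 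This identity can equivalently be packaged as the monomial colon $JS_P \cap S = J : (\prod_{x_j \notin P} x_j)^\infty$.

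Next, I apply this identity twice. Applied to $I$ itself, it gives $IS_P = Q_{\subseteq P} \, S_P$; raising to the $d$-th power gives $I^d S_P = Q_{\subseteq P}^d \, S_P$. Because $Q_{\subseteq P}$ involves only the variables appearing in $P$, the same holds for $Q_{\subseteq P}^d$, and hence every associated prime of $Q_{\subseteq P}^d$ is contained in $P$. Applying the first identity a second time to $Q_{\subseteq P}^d$ in place of $J$ gives $Q_{\subseteq P}^d \, S_P \cap S = Q_{\subseteq P}^d$. Chaining these equalities yields
\[
I^{(d)} = \bigcap_{P \in \mathrm{ass}(I)} Q_{\subseteq P}^d.
\]
The final step is to restrict the index set to $\mathrm{maxass}(I)$: if $P \subsetneq P'$ are both associated primes of $I$, then every primary component whose radical is contained in $P$ also has its radical in $P'$, so $Q_{\subseteq P} \supseteq Q_{\subseteq P'}$ and consequently $Q_{\subseteq P}^d \supseteq Q_{\subseteq P'}^d$; the larger ideal is redundant in the intersection, so only maximal associated primes survive.

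The main obstacle is the monomial-localization identity of step one, specifically the verification that the contraction $Q_{\subseteq P}^d \, S_P \cap S$ does not pick up extra monomials beyond $Q_{\subseteq P}^d$ itself, and that taking a $d$-th power commutes cleanly with extension to $S_P$. Once this identity is established for monomial ideals, the rest of the argument is a short reindexing using the containment order on $\mathrm{ass}(I)$.
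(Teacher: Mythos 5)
The paper does not prove this statement: it is quoted verbatim from Cooper--Embree--H\`a--Hoefel \cite[Theorem 3.7]{spmi} and used as a black box, so there is no internal proof to compare against. Judged on its own, your argument is correct and is essentially the standard localization argument (and close in spirit to the cited source): flatness of $S\to S_P$ lets you distribute localization over the finite intersection $I=Q_1\cap\cdots\cap Q_s$, components with $\sqrt{Q_j}\not\subseteq P$ blow up to the unit ideal, components with $\sqrt{Q_i}\subseteq P$ contract back to themselves, powers commute with extension, and the passage from ${\rm ass}(I)$ to ${\rm maxass}(I)$ is the containment-reversal $P\subseteq P'\Rightarrow Q_{\subseteq P}\supseteq Q_{\subseteq P'}$, which makes the non-maximal terms redundant. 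Note that this argument matches the definition of symbolic power used in this paper (intersection over all associated primes, not just minimal ones), which is the right convention here.

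One small inaccuracy worth fixing: a $P_i$-primary monomial ideal with $\sqrt{Q_i}\subseteq P$ is \emph{not} in general generated by pure powers of variables (e.g.\ $\langle x^2,xy,y^2\rangle$); what is true is that its minimal generators involve only the variables of $\sqrt{Q_i}$, and that it contains a pure power of each such variable. This does not damage your proof, since the contraction step $Q_iS_P\cap S=Q_i$ follows directly from primaryness (if $s\notin P\supseteq\sqrt{Q_i}$ and $sf\in Q_i$, then $f\in Q_i$), with no need for the pure-power description; similarly, the claim that all associated primes of $Q_{\subseteq P}^d$ lie inside $P$ follows because its generators involve only variables of $P$, which is the statement you should cite rather than the pure-power one. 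With that wording repaired, the sketch fills in to a complete proof.
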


 Thus, to compute the symbolic powers of principal
 $Q$-Borel ideals, we need to determine 
 ${\rm maxass}(I)$.  We introduce the following terminology.

\begin{definition}
    Let $S=\mathbb{K}[x_1,\ldots,x_n]$ and let $Q$ be a poset over its variables. Fix a monomial $m\in S$ and
    suppose that $m'|m$.  We say that $m'$ is a \emph{maximal connected component} of $m$ if 
    \begin{enumerate}
        \item[$\bullet$] $A(m')$ is connected,
        \item[$\bullet$] $A(m')$  is maximal with respect to
    inclusion, i.e., there is no other $m''$ that 
    divides $m$ such that $A(m'')$ is connected and $A(m') \subsetneq A(m'')$, and 
        \item[$\bullet$] $m'=m_O$ with $O=A(m')$, i.e., $m'$ is the 
        unique monomial of Lemma \ref{lemmaB}.
    \end{enumerate}
    \end{definition}

Note that 
    by Lemma \ref{lemmaB}, the maximal connected components of a monomial exist and are unique. 

\begin{remark}
    Using Lemma \ref{lemmaB}, we can give
    an equivalent definition of 
    a maximal connected component in terms of the poset $Q$.  Specifically,
    let $m$ be a monomial and $Q$ a poset as before. Let $L$ be the lattice of divisors of $m$ and $\Lambda$ the subposet of $L$ consisting of $\{\mu\ |\ A(\mu)\mbox{ is connected.}\}$. Then $m'$ is a maximal connected component if and only if $m'$ is a maximal element of $\Lambda$.  This alternative viewpoint may be helpful.
\end{remark}

\begin{lemma}\label{maximalcomponents}
Let $I = Q(m)$ for some monomial $m$ and
poset $Q$.   Then $P \in {\rm maxass}(I)$ if and only if 
$P = \langle x ~|~ x \in A(m') \rangle$ with $m'$ a
maximal connected component of $m$. 
\end{lemma}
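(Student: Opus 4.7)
The plan is to deduce this lemma almost immediately from Theorem \ref{assprimechar} (the characterization of associated primes of $Q(m)$) combined with the simple observation that containment of the primes $P(m') := \langle x_i \mid x_i \in A(m')\rangle$ is controlled by containment of the order ideals $A(m')$. The definition of \emph{maximal connected component} has three clauses, but only the first two affect which prime is produced; the third (the canonical-representative clause $m' = m_O$) will be handled by Lemma \ref{lemmaB}.

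First I would record the key order-theoretic observation: for divisors $m_1', m_2'$ of $m$, we have $P(m_1') \subseteq P(m_2')$ if and only if $A(m_1') \subseteq A(m_2')$. The ``if'' direction is immediate from the definition of $P(m')$, and the ``only if'' direction follows because $A(m_i')$ is precisely the set of variables appearing in $P(m_i')$. Combining this with Theorem \ref{assprimechar}, maximality of a prime $P \in {\rm ass}(I)$ is equivalent to maximality of the corresponding connected order ideal $A(m')$ among $\{A(m'') \mid m''|m,\ A(m'')\text{ connected}\}$.

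For the forward direction, suppose $P \in {\rm maxass}(I)$. By Theorem \ref{assprimechar}, $P = P(m')$ for some $m'|m$ with $A(m')$ connected, and by the observation above, $A(m')$ is maximal under inclusion among connected order ideals of the stated form. The monomial $m'$ itself need not be the canonical representative $m_O$ of $O := A(m')$ given by Lemma \ref{lemmaB}, but replacing $m'$ by $m_O$ gives a divisor of $m$ satisfying $A(m_O) = O$ and hence defining the same prime $P$. All three conditions in the definition of maximal connected component are now verified, so $P$ has the required form.

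For the reverse direction, let $m'$ be a maximal connected component of $m$. Since $m'|m$ and $A(m')$ is connected, Theorem \ref{assprimechar} guarantees $P(m') \in {\rm ass}(I)$. If $P(m') \subseteq P(m'')$ for some other $P(m'') \in {\rm ass}(I)$, then $A(m') \subseteq A(m'')$ with $A(m'')$ connected, which by maximality forces $A(m') = A(m'')$ and hence $P(m') = P(m'')$. Thus $P(m') \in {\rm maxass}(I)$. The main point is really just the equivalence between inclusion of these particular primes and inclusion of their associated order ideals; the third clause in the definition of maximal connected component is purely a normalization and introduces no obstacle.
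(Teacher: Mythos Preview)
Your proof is correct and follows essentially the same approach as the paper's own argument: both directions are deduced from Theorem~\ref{assprimechar} together with the (trivial) equivalence between containment of the primes $P(m')$ and containment of the order ideals $A(m')$, and the third clause is handled via Lemma~\ref{lemmaB}. The only cosmetic difference is that you isolate the containment equivalence explicitly at the outset, whereas the paper invokes it implicitly inside each contradiction step.
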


\begin{proof}
$(\Rightarrow)$ Suppose that $P \in {\rm maxass}(I)$.  
By Theorem \ref{assprimechar}, there exists a monomial $m'$ such 
that $m'|m$, $A(m')$ is connected, and $P = \langle x_i ~|~ x_i
\in A(m') \rangle$. We can assume that $m'=m_O$ with $O=A(m')$.  If $m'$ is not a maximal
connected component of $m$, then there
is some $m''$ that divides $m$ such that 
the connected component $A(m'')$ properly contains $A(m')$.
But since $A(m'')$ is connected, $P' = \langle x_i ~|~ x_i \in A(m'') \rangle$
is an associated prime of $I$ that properly contains $P$, contradicting the
maximality of $P$.  We now have the desired contradiction.

$(\Leftarrow)$ We reverse the above argument. 
Let $m'$ be a maximal connected component of $m$. By Theorem
\ref{assprimechar}, there is a prime ideal $P \in {\rm ass}(I)$
such that $P = \langle x_i ~|~ x_i \in A(m') \rangle$
since $A(m')$ is connected.  If $P$
is not a maximal associated prime, then there is a prime ideal $P'$ with $P \subsetneq P'$.  But
then $P' = \langle x_i ~|~ x_i \in A(m'') \rangle$ for some
$m''$ such that $m''|m$ and $A(m'')$ is connected.
But then $A(m') \subsetneq A(m'')$ contradicting the fact
that $m'$ is a maximal connected component of $m$.
\end{proof}

The following lemma on distinct maximal connected 
components is required.

\begin{lemma}\label{lemma.maxfactors}
    Let $m\in S$ be a monomial, and let
    $m_1$ an $m_2$  be two distinct maximal connected components of $m$.  
    Then $A(m_1) \cap A(m_2) = \emptyset$.
    \end{lemma}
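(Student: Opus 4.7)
The plan is to argue by contradiction. Suppose $A(m_1)\cap A(m_2)\neq\emptyset$; the goal is to produce a divisor of $m$ whose order ideal is connected and strictly contains $A(m_1)$, contradicting the maximality clause in the definition of a maximal connected component.

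The natural candidate is $\mu := \mathrm{lcm}(m_1,m_2)$. Since $m_1\mid m$ and $m_2\mid m$, we have $\mu\mid m$, and Lemma \ref{lemmaA} gives $A(\mu)=A(m_1)\cup A(m_2)$. I then need to check that $A(\mu)$ is connected. Because $A(m_1)$ and $A(m_2)$ are order ideals of $Q$, the cover relations inside each one agree with the cover relations of $Q$ restricted to their vertex sets, so the Hasse diagram of $A(\mu)$ is the induced subgraph of the Hasse diagram of $Q$ on $A(m_1)\cup A(m_2)$. Two connected induced subgraphs that share at least one vertex have connected union, so $A(\mu)$ is connected.

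It remains to show $A(\mu)\supsetneq A(m_1)$. Here I use distinctness of $m_1$ and $m_2$ to force $A(m_1)\neq A(m_2)$: if $A(m_1)=A(m_2)=O$, then the uniqueness clause in the definition of maximal connected component (via Lemma \ref{lemmaB}) would give $m_1=m_O=m_2$, contradicting $m_1\neq m_2$. Hence, after relabeling if necessary, we may assume $A(m_2)\not\subseteq A(m_1)$, so $A(\mu)=A(m_1)\cup A(m_2)\supsetneq A(m_1)$. Since $\mu\mid m$ and $A(\mu)$ is connected, this contradicts the maximality of $A(m_1)$, completing the proof.

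The only step requiring care is the assertion that Hasse-diagram connectedness is preserved when taking the union of two order ideals that share a vertex; once that is noted (together with the fact that covers in an order ideal coincide with covers of $Q$ between its elements), the argument is formal and hinges entirely on Lemmas \ref{lemmaA} and \ref{lemmaB}.
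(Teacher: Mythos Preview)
Your proof is correct and follows essentially the same approach as the paper: both argue by contradiction via $\mu=\mathrm{lcm}(m_1,m_2)$, using Lemma~\ref{lemmaA} to identify $A(\mu)=A(m_1)\cup A(m_2)$ and invoking connectedness of the union through a shared vertex to contradict maximality. If anything, your version is more careful than the paper's in justifying why the containment $A(m_1)\subsetneq A(\mu)$ is strict (via the uniqueness clause of Lemma~\ref{lemmaB}), a point the paper's proof leaves implicit.
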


\begin{proof}
Suppose that $y \in A(m_1) \cap A(m_2)$.  Then $y$ 
is path connected to every element in $A(m_1)$, and 
similarly, to every element in $A(m_2)$ since
both $A(m_1)$ and $A(m_2)$ are connected.
But then $A({\rm lcm}(m_1,m_2))$ is a connected
component of $A(m)$ that properly contains
$A(m_1)$ and $A(m_2)$.  But this contradicts the 
fact that $A(m_1)$ and $A(m_2)$ are  maximal.
\end{proof}

\begin{lemma}\label{decomposition}
    Let $m\in S$ be a monomial and let $m_1,\ldots,m_r$ be all the maximal connected components of $m$. 
    Then $m = m_1\cdots m_r$.
\end{lemma}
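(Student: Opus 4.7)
The plan is to show that the sets $A(m_i)\cap\mathrm{supp}(m)$ partition $\mathrm{supp}(m)$, and that on each block $A(m_i)\cap\mathrm{supp}(m)$ the exponents of $m_i$ coincide with those of $m$. Multiplying the resulting factorizations then gives $m=m_1\cdots m_r$.

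For the exponent-matching step, write $m=\prod_j x_j^{a_j}$ and define
\[ n_i \;=\; \prod_{x_j\in A(m_i)\cap\mathrm{supp}(m)} x_j^{a_j}. \]
Clearly $n_i\mid m$. I would check that $A(n_i)=A(m_i)$: the inclusion $A(n_i)\subseteq A(m_i)$ holds because $\mathrm{supp}(n_i)\subseteq A(m_i)$ and $A(m_i)$ is an order ideal, while any $x\in A(m_i)$ satisfies $x\leq_Q x_j$ for some $x_j\in\mathrm{supp}(m_i)\subseteq A(m_i)\cap\mathrm{supp}(m)=\mathrm{supp}(n_i)$, giving the reverse inclusion. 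Lemma \ref{lemmaB} applied to $m$ then yields $n_i\mid m_i$. Conversely, since $m_i\mid m$ we have $\mathrm{supp}(m_i)\subseteq A(m_i)\cap\mathrm{supp}(m)$, and the exponent of each variable in $m_i$ is bounded above by its exponent in $m$, which equals its exponent in $n_i$; hence $m_i\mid n_i$, and so $m_i=n_i$.

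For the partition step, disjointness is exactly Lemma \ref{lemma.maxfactors}. To cover $\mathrm{supp}(m)$, fix $x_j\in\mathrm{supp}(m)$ and let $C$ be the connected component of the Hasse diagram of $A(m)$ containing $x_j$. Because $A(m)$ is an order ideal, any chain $y<_Q\cdots<_Q z$ with $z\in C$ lies inside $A(m)$ and is path-connected, so $C$ is itself an order ideal. Setting $m'=\prod_{x_k\in C\cap\mathrm{supp}(m)} x_k^{a_k}$, an argument parallel to the one for $n_i$ shows $A(m')=C$, so $C$ is a connected order ideal of the form $A(m')$ with $m'\mid m$. By maximality, $C\subseteq A(m_i)$ for some $i$; but $A(m_i)$ is itself connected, contained in $A(m)$, and meets $C$, so $A(m_i)\subseteq C$ as well. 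Hence $C=A(m_i)$, and $x_j\in A(m_i)\cap\mathrm{supp}(m)$.

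Combining the two steps, every variable $x_j\in\mathrm{supp}(m)$ belongs to exactly one block $A(m_i)\cap\mathrm{supp}(m)$, and by the first step its full power $x_j^{a_j}$ appears in the corresponding $m_i$. Multiplying over $i$ yields $m=m_1\cdots m_r$. The main obstacle I anticipate is the covering half of the partition argument: recognising a connected component $C$ of $A(m)$ as $A(m')$ for an actual divisor $m'\mid m$ and then correctly invoking maximality to conclude $C=A(m_i)$. The remainder of the argument is bookkeeping with order ideals and Lemma \ref{lemmaB}.
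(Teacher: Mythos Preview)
Your proof is correct and uses the same ingredients as the paper's: disjointness of the $A(m_i)$ via Lemma~\ref{lemma.maxfactors}, coverage of $\mathrm{supp}(m)$ via maximality, and the full-exponent condition via Lemma~\ref{lemmaB}. The paper organizes these as a proof by contradiction (supposing $m_1\cdots m_r$ strictly divides $m$ and ruling out a missing variable or a deficient exponent), whereas you proceed constructively by identifying each $m_i$ with the explicit monomial $n_i=\prod_{x_j\in A(m_i)\cap\mathrm{supp}(m)}x_j^{a_j}$; the underlying argument is the same.
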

\begin{proof}
Note that by Lemma \ref{lemma.maxfactors}, it
follows that all the supports of $m_1,\ldots,m_r$
are pairwise disjoint, so $m_1\cdots m_r$ divides $m$.
If $m_1 \cdots m_r$ strictly divides $m$, that means that
there is either: (1) a variable $x_j$ that divides $m$ that 
does not divide any of $m_1,\ldots,m_r$, or (2)
a variable $x_j$ such that $x_j^d|m$ and $x_j^a$ divides
some $m_i$, but $a <d$.
We show that neither case can happen.

If $x_j|m$, then $A(x_j) \subseteq A(m)$ and $A(x_j)$ is connected.  Consider all
$m'$ such that $m'|m$, $A(x_j) \subseteq A(m')$,
and $A(m')$ is connected. In addition, suppose
$m'$ is picked to be maximal with the property
with respect to both inclusion and the degree of $m'$. But then $m'$ would be a maximal connected component, which is a contradiction.

For case (2), suppose that $x_j^d|m$.  Since the
$m_1,\ldots,m_r$ have distinct support, $x_j$ can only divide
one of these monomials.  After relabeling, suppose $x_j|m_1$.
Suppose $x_j^a$ with $a \geq 1$ is the largest power of $x_j$ that divides
$m_1$.  We claim that $a=d$.  Since $m_1 |m$ we know $a\leq d$. If $1 \leq a<d$, then $A(m_1x_j) = A(m_1)$ since $m_1$ and 
$m_1x_j$ have the same support. But then $m_1$ is not
a maximal connected component since $\deg m_1x_j > \deg m_1$
and $m_1x_j|m$.  So case (2) cannot happen.
\end{proof}

We relate the primary decomposition of $Q(m)$
with its maximal connected components.
\begin{lemma}\label{Lemma.maxdecomp}
Let $m\in S$ be a monomial and let $m_1,\ldots,m_r$ be all the maximal connected components of $m$. Then
$$Q(m)=Q(m_1)\cap\cdots\cap Q(m_r).$$
Furthermore, if $Q(m) = Q_1 \cap \cdots \cap Q_s$ is
a primary decomposition of $Q(m)$, then
$$Q(m_i)=Q_{\subseteq \langle A(m_i)\rangle} ~~\mbox{for
$i=1,\ldots,r$}$$
where $\langle A(m_i) \rangle = \langle x ~|~ x \in A(m_i) \rangle$.
\end{lemma}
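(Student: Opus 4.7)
The plan is to handle the two statements in sequence, with the first being a direct iteration of the lemmas already established in the paper, and the second reducing to a localization argument.

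For the equality $Q(m) = Q(m_1)\cap\cdots\cap Q(m_r)$, I would first use Lemma \ref{decomposition} to write $m = m_1\cdots m_r$, and then repeatedly apply Lemma \ref{lem-principalproducts} to obtain $Q(m) = Q(m_1)\cdots Q(m_r)$. By Lemma \ref{lemma.maxfactors} the sets $A(m_i)$ are pairwise disjoint, so I can prove by induction on $r$ that $Q(m_1)\cdots Q(m_r) = Q(m_1)\cap\cdots\cap Q(m_r)$: the base case $r=2$ is exactly Lemma \ref{intersections}, and the inductive step treats $Q(m_1\cdots m_{r-1}) \cap Q(m_r)$ using Lemma \ref{lemmaA} to verify that $A(m_1\cdots m_{r-1}) = A(m_1)\cup\cdots\cup A(m_{r-1})$ is disjoint from $A(m_r)$.

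For the second statement, the key observation will be that the associated primes of the $Q(m_j)$ sit in disjoint variable sets. By Theorem \ref{assprimechar}, every associated prime of $Q(m_i)$ has the form $\langle A(m'_i)\rangle$ with $m'_i\mid m_i$ and $A(m'_i)$ connected, so in particular $A(m'_i)\subseteq A(m_i)$ and the prime lies inside $\langle A(m_i)\rangle$. For $j \neq i$, the associated primes of $Q(m_j)$ live in $\langle A(m_j)\rangle$, which has empty intersection with $A(m_i)$ by Lemma \ref{lemma.maxfactors}; hence $Q(m_j)$ contains a minimal generator supported entirely in $A(m_j)$, which becomes a unit upon localizing at $\langle A(m_i)\rangle$. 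Therefore $Q(m_j)S_{\langle A(m_i)\rangle} = S_{\langle A(m_i)\rangle}$ for $j\neq i$, and combining with the first part gives
\[
Q(m)\,S_{\langle A(m_i)\rangle} \;=\; Q(m_i)\,S_{\langle A(m_i)\rangle}.
\]

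To conclude, I would recall that $Q_{\subseteq \langle A(m_i)\rangle}$ is precisely the contraction $Q(m)S_{\langle A(m_i)\rangle}\cap S$, since for any primary component $Q_j$ with $\sqrt{Q_j}\subseteq \langle A(m_i)\rangle$ one has $Q_j S_{\langle A(m_i)\rangle}\cap S = Q_j$, while the remaining primary components expand to the whole localization. The same contraction identity applied to $Q(m_i)$, whose associated primes all lie in $\langle A(m_i)\rangle$ by the first paragraph, yields $Q(m_i) = Q(m_i)S_{\langle A(m_i)\rangle}\cap S$. Chaining the two gives the desired $Q_{\subseteq \langle A(m_i)\rangle} = Q(m_i)$. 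The main obstacle I anticipate is the careful bookkeeping in this last step: verifying that contraction commutes correctly with the intersection decomposition regardless of which primary decomposition is chosen, which hinges on the disjointness of the variable sets $A(m_i)$ supplied by Lemma \ref{lemma.maxfactors}.
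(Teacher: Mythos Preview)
Your argument for the first equality is exactly the paper's: decompose $m=m_1\cdots m_r$ via Lemma~\ref{decomposition}, turn the product into $Q(m)$ via Lemma~\ref{lem-principalproducts}, and then convert the product into an intersection by iterating Lemma~\ref{intersections}, using Lemma~\ref{lemmaA} and Lemma~\ref{lemma.maxfactors} to check the disjointness hypothesis at each step.

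For the second claim you take a slightly different (and perfectly valid) route. The paper argues combinatorially: by Theorem~\ref{assprimechar} and the disjointness of the $A(m_j)$, the associated primes of $Q(m)$ partition as $\bigsqcup_j \mathrm{ass}(Q(m_j))$, and since $\langle A(m_i)\rangle$ is the unique maximal element of $\mathrm{ass}(Q(m_i))$, the primary components of $Q(m)$ with radical inside $\langle A(m_i)\rangle$ are exactly those coming from a primary decomposition of $Q(m_i)$, giving $Q_{\subseteq\langle A(m_i)\rangle}=Q(m_i)$ directly. You instead pass through localization at $P=\langle A(m_i)\rangle$: the identity $Q_{\subseteq P}=IS_P\cap S$ is the standard behaviour of primary decomposition under localization, the generators of $Q(m_j)$ for $j\neq i$ are units in $S_P$ because their supports lie in $A(m_j)\subseteq X\setminus P$, and $Q(m_i)S_P\cap S=Q(m_i)$ because every associated prime of $Q(m_i)$ sits inside $P$. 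Your approach has the advantage of making the independence from the chosen primary decomposition transparent, at the cost of invoking a (standard) localization fact; the paper's version stays closer to the combinatorics of $Q$ but leaves the passage from ``the associated primes partition'' to ``the primary components partition'' somewhat implicit. Either way the engine is the same pair of inputs, Theorem~\ref{assprimechar} and Lemma~\ref{lemma.maxfactors}.
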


\begin{proof}
By Lemma \ref{decomposition} we have
$m=m_1\cdots m_r$.  By Lemma \ref{lemma.maxfactors} and Lemma \ref{lemmaA},
we have that $A(m_1\cdots m_{j-1}) \cap
A(m_j) = \left(\bigcup_{i=1}^{j-1} A(m_i)\right)\cap A(m_j) = \emptyset$ for $j=2,\ldots,r$.  So by
repeatedly applying Lemma \ref{intersections}
we have
$$Q(m)=\prod_{i=1}^r Q(m_i)=\bigcap_{i=1}^r Q(m_i).$$

    For the second claim, observe that any associated prime of $Q(m)$ is an associated prime of $Q(m_j)$ for just one $j$ (due to Theorem \ref{assprimechar} and the definition of a maximal connected component); for the same reason, any associated prime of $Q(m_i)$ is an associated prime of $Q(m)$. Since $Q(m_i)$ has just one maximal associated prime, 
    namely, $\langle A(m_i) \rangle$, we 
    then have
    $Q_{\subseteq \langle A(m_i)\rangle}=Q(m_i),$
    as desired.
\end{proof}

We arrive at the main result of this section.

\begin{theorem} \label{maintheoremsymbolic}
Let $I = Q(m)$ for some monomial $m$ and
poset $Q$. Then 
$$I^{(d)}=I^d ~~\mbox{for all $d  \geq 1$}.$$
\end{theorem}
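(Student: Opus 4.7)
The plan is to assemble the previous lemmas into a short chain of equalities. Since the containment $I^d \subseteq I^{(d)}$ is automatic, the real work is to show $I^{(d)} \subseteq I^d$, and in fact I expect the two intermediate descriptions to match on the nose.

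First I would apply Theorem \ref{monomialsymbolic} to write
$$I^{(d)} = \bigcap_{P \in {\rm maxass}(I)} Q_{\subseteq P}^d.$$
Let $m_1,\ldots,m_r$ be the maximal connected components of $m$. Lemma \ref{maximalcomponents} identifies ${\rm maxass}(I)$ with the primes $\langle A(m_i)\rangle$, and the second part of Lemma \ref{Lemma.maxdecomp} gives $Q_{\subseteq \langle A(m_i)\rangle} = Q(m_i)$. Hence
$$I^{(d)} = \bigcap_{i=1}^r Q(m_i)^d.$$

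Next I would replace each $Q(m_i)^d$ by a single principal $Q$-Borel ideal. Applying Lemma \ref{lem-principalproducts} inductively $d-1$ times yields $Q(m_i)^d = Q(m_i^d)$. The key combinatorial observation is that $m_i^d$ has the same support as $m_i$, so by Lemma \ref{supportlem1} we have $A(m_i^d)=A(m_i)$. By Lemma \ref{lemma.maxfactors} the sets $A(m_i)$ are pairwise disjoint, and by Lemma \ref{lemmaA} the union $A(m_1^d\cdots m_{j-1}^d)$ equals $\bigcup_{i<j}A(m_i)$, which remains disjoint from $A(m_j^d)$. Thus iterated application of Lemma \ref{intersections} converts the intersection into a product:
$$\bigcap_{i=1}^r Q(m_i^d) = \prod_{i=1}^r Q(m_i^d).$$

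Finally I would collapse the product back to a principal ideal. Using Lemma \ref{lem-principalproducts} again, the product equals $Q(m_1^d\cdots m_r^d) = Q((m_1\cdots m_r)^d)$, and then Lemma \ref{decomposition} identifies $m_1\cdots m_r$ with $m$, giving $Q(m^d) = Q(m)^d = I^d$. Chaining these equalities produces $I^{(d)} = I^d$, as desired. The only subtle point I anticipate is making sure the disjoint-support hypothesis of Lemma \ref{intersections} is preserved when passing from $m_i$ to $m_i^d$ and when iterating over more than two factors, but Lemmas \ref{supportlem1} and \ref{lemmaA} handle this cleanly, so no genuine obstacle remains once the bookkeeping is set up.
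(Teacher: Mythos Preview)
Your proposal is correct and follows essentially the same route as the paper's proof: use Theorem~\ref{monomialsymbolic} together with Lemmas~\ref{maximalcomponents} and~\ref{Lemma.maxdecomp} to write $I^{(d)}=\bigcap_i Q(m_i)^d$, convert each factor to $Q(m_i^d)$ via Lemma~\ref{lem-principalproducts}, use disjointness of the $A(m_i)$ to turn the intersection into a product, and then collapse back to $Q(m^d)=I^d$ using Lemmas~\ref{lem-principalproducts} and~\ref{decomposition}. If anything, your write-up is slightly more explicit than the paper's in justifying the intersection-to-product step by invoking Lemma~\ref{intersections} and carefully checking its hypothesis under iteration via Lemmas~\ref{supportlem1} and~\ref{lemmaA}.
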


\begin{proof} Let $m_1,\ldots,m_r$ be the maximal connected components of $m$. 
By Lemma \ref{maximalcomponents}, 
${\rm maxass}(I) = \{ \langle A(m_i) \rangle ~|~
i=1,\ldots,r\}$.  
By Theorem \ref{monomialsymbolic} and Lemma \ref{Lemma.maxdecomp} we have
$$I^{(d)}=\bigcap_{i=1}^r Q^d_{\subseteq \langle A(m_i) \rangle} = \bigcap_{i=1}^r \left(Q(m_i)\right)^d.$$

But by Lemma \ref{lem-principalproducts} we have
$$\bigcap_{i=1}^r \left(Q(m_i)\right)^d
=\bigcap_{i=1}^r \left(Q(m_i^d)\right)$$
Since $A(m_i) = A(m_i^d)$, 
it follows from Lemma \ref{lemma.maxfactors}
that all 
the generators of $Q(m_i^d)$ are relatively prime with the all  generators of $Q(m_j^d)$ for any $i\neq j$.
Thus 
$$I^{(d)}=\bigcap_{i=1}^r \left(Q(m_i^d)\right)=\prod_{i=1}^r Q(m_i^d)=Q(m^d)=Q(m)^d = I^d.$$
The third and fourth equality follow from
Lemma \ref{lem-principalproducts} and
the fact that $m=m_1\cdots m_r$.
\end{proof}

Theorem \ref{maintheoremsymbolic} allows us to compute some
invariants related to the ideal containment problem.
We recall these definitions (see \cite{CHHVT} for
more on the properties of these invariants). For a homogeneous ideal $I$,  $\alpha(I)$ denotes the smallest degree of an element in a minimal set of homogeneous generators for $I$.
For a graded $R$-module $M$, $\mu(M)$ denotes its minimal number of generators. 
\begin{definition}
Let $I$ be a homogeneous ideal of $S$.
\begin{enumerate}
\item (see \cite{BH}) The $\emph{Waldschmidt constant}$ of $I$,
denoted by $\widehat{\alpha}(I)$, is
$$\hat\alpha(I) := \lim_{s \rightarrow \infty} \frac{\alpha(I^{(s)})}{s}.$$
\item (see \cite{GGSVT}) The \emph{$d$-th symbolic defect} of $I$, denoted by $\mathrm{sdefect}(I,d)$, as
    $$\mathrm{sdefect}(I,d)=\mu\left(I^{(d)}/I^d\right).$$
\item (see \cite{BH}) The \emph{resurgence} of $I$, denoted by $\rho(I)$, is
    $$\rho(I)=\sup\left\{\frac{s}{r}\ |\ I^{(s)}\not\subset I^r\right\}.$$
\end{enumerate}
\end{definition}

\begin{corollary}\label{symboliccor}
Let $I = Q(m)$ for some monomial $m$ and
poset $Q$.  Then
    \begin{enumerate}
    \item $\widehat{\alpha}(I) = \deg(m)$,
    \item ${\rm sdefect}(I,d) = 0$ for all $d \geq 1$, and
    \item $\rho(I) = 1$.
    \end{enumerate}
\end{corollary}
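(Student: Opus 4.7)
The plan is to reduce each of the three statements directly to Theorem \ref{maintheoremsymbolic}, which gives $I^{(s)} = I^s$ for all $s \geq 1$, together with the basic observation (Lemma \ref{degree-gens} applied to $X = \{m\}$) that every minimal generator of $I = Q(m)$ has the same degree, namely $\deg(m)$. The corollary then becomes essentially bookkeeping, so I do not expect a genuine obstacle; the only point requiring a small argument is the equality case for the resurgence.

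For part (1), the key step is to compute $\alpha(I^{(s)})$ on the nose. Since $I^{(s)} = I^s$ by Theorem \ref{maintheoremsymbolic}, and since every minimal generator of $I$ has degree $\deg(m)$, every minimal generator of $I^s$ is a product of $s$ such generators and hence has degree exactly $s \deg(m)$. Thus $\alpha(I^{(s)}) = s \deg(m)$, and dividing by $s$ and passing to the limit yields $\widehat{\alpha}(I) = \deg(m)$.

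For part (2), I would simply observe that by Theorem \ref{maintheoremsymbolic}, $I^{(d)}/I^d = 0$ as an $S$-module, so its minimal number of generators is $0$.

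For part (3), I would argue that $I^{(s)} \subseteq I^r$ if and only if $I^s \subseteq I^r$ if and only if $s \geq r$ (the last equivalence uses that $I \subsetneq S$ and $I^s \subseteq I^r$ whenever $s \geq r$, while $I^s \not\subseteq I^r$ when $s < r$ because $\alpha(I^s) = s\deg(m) < r\deg(m) = \alpha(I^r)$ forces some generator of $I^s$ to lie outside $I^r$). Hence the set over which the supremum in the definition of $\rho(I)$ is taken equals $\{s/r \mid s < r\}$, which is bounded above by $1$ and has $1$ as its supremum (take $s = r-1$ and let $r \to \infty$). Therefore $\rho(I) = 1$, completing the proof.
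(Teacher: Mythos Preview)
Your proposal is correct and follows exactly the paper's approach: the paper's own proof is a single sentence stating that all three parts follow directly from $I^d = I^{(d)}$ for all $d \geq 1$, and you have simply spelled out the routine verifications that the paper leaves implicit. Your additional care in part (3), checking that the supremum is actually attained as $1$ by letting $s = r-1$ with $r \to \infty$, is a welcome bit of rigor that the paper omits.
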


\begin{proof}
These results follow directly from the fact that $I^d = I^{(d)}$ for all $d \geq 1$.
\end{proof}

\begin{remark}
    Observe that Corollary \ref{symboliccor} holds for principal ideals in the regular sense, thus
    illustrating the theme that principal $Q$-Borel ideals behave like principal ideals.
\end{remark}

\begin{remark}
For principal $Q$-Borel ideals
$I = Q(m)$, Corollary \ref{symboliccor} shows that the Waldschmidt constant is very easy to obtain from $m$.  If we consider
square-free $Q$-Borel ideals, it
becomes much harder to determine
this invariant.  In a follow up
paper \cite{CKSVT21}, we look at
the Waldschmidt constant
of square-free $Q$-Borel ideals
in the special case that $Q$ is the 
chain $C:x_1 < \cdots <x_n$, or in other words, square-free
Borel ideals.
\end{remark}


\section{Associated primes
of powers of principal $Q$-Borel ideals}

As noted in the introduction, 
studying the set of the associated
primes of a power of an ideal has been
of recent interest.   One property
that has been studied is the
persistence property.
Formally, an ideal $I$ is said
to have the \emph{persistence property} if
${\rm ass}(I^i) \subseteq {\rm ass}(I^{i+1})$ for all $i \geq 1$.
Given this interest,
it makes sense to determine
if principal $Q$-Borel ideals have this
property.  This short section
gives two different proofs that 
principal $Q$-Borel ideals
have this property. 

Our first proof relies on the work of Herzog, Rauf, and
Vladoiu \cite{HRV}; we recall a key definition from
\cite{HRV}.

\begin{definition}
A monomial ideal $I$ is 
a \emph{transversal polymatroidal} ideal
if 
\[I = P_1P_2\cdots P_t\]
for prime monomial ideals $P_1,\ldots,P_t$.
\end{definition}

\begin{lemma}\label{transversal}
Let $I=Q(m)$ for some monomial $m$ and poset $Q$.  Then $I$ is a transversal
polymatroidal ideal.
\end{lemma}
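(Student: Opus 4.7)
The plan is to reduce the claim to the simple observation that, for a single variable $x_i$, the principal $Q$-Borel ideal $Q(x_i)$ is already prime.

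First I would show that for any variable $x_i$, the ideal $Q(x_i)$ equals the monomial prime
\[P_{x_i} = \langle x_j \mid x_j \leq_Q x_i \rangle.\]
This is immediate from the definition of $Q$-Borel moves applied to a single variable: every element of $G(Q(x_i))$ is of the form $x_j$ with $x_j \leq_Q x_i$, and conversely each such $x_j$ lies in $Q(x_i)$.

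Next I would iterate Lemma \ref{lem-principalproducts}. Writing $m = x_{i_1}^{a_1} \cdots x_{i_s}^{a_s}$ as the factorization into powers of variables, repeated application of $Q(m_1)Q(m_2)=Q(m_1m_2)$ yields
\[
Q(m) = Q(x_{i_1})^{a_1} \cdots Q(x_{i_s})^{a_s} = P_{x_{i_1}}^{a_1} \cdots P_{x_{i_s}}^{a_s},
\]
expressing $Q(m)$ as a product of prime monomial ideals. By the definition of a transversal polymatroidal ideal, this finishes the proof.

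There isn't really a substantive obstacle here; the statement is essentially a corollary of Lemma \ref{lem-principalproducts} once one recognizes that $Q(x_i)$ is the monomial prime generated by the principal order ideal of $x_i$ in $Q$. The only thing to be slightly careful about is matching definitions: transversal polymatroidal only requires the prime factors to be monomial primes (with repetition allowed), which is exactly what our product delivers.
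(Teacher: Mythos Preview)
Your argument is correct and follows essentially the same idea as the paper: both establish that $Q(m)$ is a product of prime monomial ideals, which is exactly the definition of a transversal polymatroidal ideal. The only difference is that the paper simply cites \cite[Proposition~2.7]{qborel} for this fact, whereas you give a self-contained derivation inside the paper by combining the observation $Q(x_i)=\langle x_j \mid x_j\leq_Q x_i\rangle$ with repeated use of Lemma~\ref{lem-principalproducts}.
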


\begin{proof}
This result follows from 
\cite[Proposition 2.7]{qborel} which
states that a principal $Q$-Borel
ideal is a product of prime monomial
ideals.
\end{proof}

We then have following result,
which implies that principal
$Q$-Borel ideals have
the persistence property.  Our first
proof makes use of a property
of polymatroidal ideals,
while our second proof uses Lemma \ref{lem-principalproducts}, and is self-contained.

\begin{theorem}\label{persistence}
Let $I=Q(m)$ for some monomial $m$ and poset $Q$. Then
    we have
    \[{\rm ass}(I) = {\rm ass}(I^s) ~~
    \mbox{for all $s \geq 1$.}\]
\end{theorem}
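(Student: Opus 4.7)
The plan is to give two proofs, as the authors promise. For the first proof, I would invoke Lemma \ref{transversal}, which asserts that every principal $Q$-Borel ideal is a product of monomial primes and hence transversal polymatroidal. Then the statement follows from the result of Herzog, Rauf, and Vladoiu \cite{HRV} that transversal polymatroidal ideals have the same set of associated primes for all powers. This route essentially reduces the theorem to a citation.

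For a self-contained second proof, the key tool will be Lemma \ref{lem-principalproducts}, which by a straightforward induction on $s$ yields $I^s = Q(m)^s = Q(m^s)$ for every $s \geq 1$. Thus Theorem \ref{assprimechar}, applied to $Q(m^s)$ in place of $Q(m)$, describes ${\rm ass}(I^s)$ as precisely the set of primes of the form $\langle x_i \mid x_i \in A(m')\rangle$ where $m'$ divides $m^s$ and $A(m')$ is connected. The task then reduces to showing that the set of connected order ideals $A(m')$ obtained from divisors of $m^s$ is the same as that obtained from divisors of $m$.

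Both containments should be quick. The inclusion ${\rm ass}(I) \subseteq {\rm ass}(I^s)$ is immediate, since any divisor $m''$ of $m$ also divides $m^s$. For the reverse, given $m' \mid m^s$ with $A(m')$ connected, I would observe that ${\rm supp}(m') \subseteq {\rm supp}(m^s) = {\rm supp}(m)$, so the square-free monomial $m'' := \prod_{i \in {\rm supp}(m')} x_i$ divides $m$ and has the same support as $m'$. Lemma \ref{supportlem1} then yields $A(m'') = A(m')$, producing the same prime as a divisor of $m$, which finishes the argument.

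Neither argument looks particularly deep. The only potential obstacle is confirming that the cited polymatroidal result in \cite{HRV} delivers genuine equality (not merely persistence) of the associated primes across all powers; if that precise statement were not off-the-shelf, the second proof makes the first redundant, since it uses only machinery already developed in the excerpt.
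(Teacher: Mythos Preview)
Your proposal is correct and matches the paper's two proofs essentially line for line: the first proof cites Lemma \ref{transversal} together with \cite[Corollary 3.6]{HRV} (which does give equality of associated primes for all powers of a transversal polymatroidal ideal), and the second proof uses $I^s=Q(m^s)$ from Lemma \ref{lem-principalproducts}, Theorem \ref{assprimechar}, and the square-free reduction via Lemma \ref{supportlem1} exactly as you outline.
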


\begin{proof}[First Proof]
By \cite[Corollary 3.6]{HRV}, 
every transversal polymatroidal
ideal $J$ satisfies ${\rm ass}(J)
= {\rm ass}(J^s)$ for all $s \geq 1$.
Now apply Lemma \ref{transversal}.
\end{proof}

\begin{proof}[Second Proof]
By repeatedly applying Lemma \ref{lem-principalproducts}, $I^s = Q(m)^s = Q(m^s)$.
If $P \in {\rm ass}(I)$, then by 
Theorem \ref{assprimechar}, there is a $m'$ such
$m'|m$ and $A(m')$ is connected
and $P = \langle x_i ~|~ x_i \in A(m') \rangle$. But then $m'|m^s$ and $A(m')$ is connected, so
$P$ is also an associated prime of $I^s = Q(m^s)$.

Conversely, suppose that $P \in 
{\rm ass}(I^s) = {\rm ass}(Q(m^s))$.  By
Theorem \ref{assprimechar}, there is a monomial 
$m'$ that divides $m^s$ such that $A(m')$ is 
connected and $P = \langle x_i ~|~ x_i \in A(m')
\rangle$.  If 
$m' = x_{i_1}^{b_{i_1}}\cdots x_{i_r}^{b_{i_r}}$
with $b_{i_j} >0$,
let $m'' =x_{i_1}\cdots x_{i_r}$.  
Since $m'|m^s$, we have $m''|m$.  Furthermore,
because $m'$ and $m''$ share the same support,
$A(m') = A(m'')$ by Lemma \ref{supportlem1}.
So, we have $m''$ divides $m$ and
$A(m'')$ is connected.  So by Theorem \ref{assprimechar}, $P = \langle x_i ~|~ x_i 
\in A(m') = A(m'') \rangle$ is an associated
prime of $I$, as desired.
\end{proof}



\section{The analytic spread of principal $Q$-Borel ideals}

In this section, we compute the analytic spread of 
principal $Q$-Borel ideals $Q(m)$
and square-free principal $Q$-Borel ideals
$sfQ(m)$.  In particular,
this invariant is expressed in terms of
the properties of the order ideal $A(m)$
viewed as an induced subposet of $Q$.  
We  recall the definition of analytic spread.

\begin{definition}\label{defn:analytic}
Let $I \subseteq S =\mathbb{K}[x_1,\ldots,x_n]$ be a homogeneous
ideal, and let ${\bf m} = \langle x_1,\ldots,x_n \rangle$.  
The \emph{analytic spread} of $I$, denoted $\ell(I)$, is the 
Krull dimension of the ring 
\[\mathcal{F}(I) = \bigoplus_{i \geq 0} \frac{I^i}{{\bf m}I^i}
~~~~\mbox{where $I^0 = S$}.\]
\end{definition}

\begin{remark}
The ring $\mathcal{F}(I)$  is usually
referred to as the \emph{special fiber ring}.   The special fiber
ring is also isomorphic to $\mathcal{R}(I)/{\bf m}\mathcal{R}(I)$ where
$\mathcal{R}(I) = R[It] = \bigoplus_{i \geq 0} I^it^i \subseteq R[t]$
is the Rees algebra of $I$.  Roughly speaking, the analytic
spread is the minimum number of generators of an ideal
$J$ that is a reduction of $I$ (e.g., see \cite[Corollary 8.2.5]{HS}).
\end{remark}

The next lemma gives us a tool to compute
$\ell(I)$ when $I$ is generated
by monomials all of the same degree.

\begin{lemma}{\cite[Lemma 3.2]{appei}}\label{ranklemma}
    Let $I= \langle x^{\alpha_1},\ldots,x^{\alpha_r} \rangle $ be a monomial ideal and let $A$ be the matrix with columns $\alpha_i$. If $\deg x^{\alpha_i}=d$ for all $i$, then
     the analytic spread of $I$ is
    $$\ell(I)=\mathrm{rank}\ A.$$
\end{lemma}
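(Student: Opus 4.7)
The plan is to identify the special fiber ring $\mathcal{F}(I)$ with the monomial $\mathbb{K}$-subalgebra $R := \mathbb{K}[x^{\alpha_1},\ldots,x^{\alpha_r}]$ of $S$, and then compute $\dim R$ via transcendence degree. The equigeneration hypothesis is essential for the identification step.

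For the first step, I would consider the surjective $\mathbb{K}$-algebra maps $\psi : \mathbb{K}[T_1,\ldots,T_r] \twoheadrightarrow \mathcal{F}(I)$ sending $T_j$ to the class of $x^{\alpha_j}t$ in $\mathcal{F}(I)$, and $\phi : \mathbb{K}[T_1,\ldots,T_r] \twoheadrightarrow R$ sending $T_j$ to $x^{\alpha_j}$, and prove $\ker\psi = \ker\phi$. The key observation is that since every generator of $I$ has degree $d$, every product of $k$ generators of $I$ is a monomial of degree $kd$ lying in $I^k$. Such a monomial cannot be written as $x_i \cdot g$ with $g \in I^k$ for degree reasons, so it is automatically a minimal generator of $I^k$; hence the distinct monomials $\{x^{a_1\alpha_1 + \cdots + a_r\alpha_r} : a_1 + \cdots + a_r = k\}$ form a $\mathbb{K}$-basis of $I^k/\mathbf{m}I^k$. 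Splitting any $f \in \ker\psi$ by $T$-degree and then collecting like monomials under $\psi$ shows that $f \in \ker\psi$ iff in each $T$-degree $k$ the coefficients of the $T^a$ with a common value of $\sum a_i\alpha_i$ sum to zero, which is exactly the defining relation of the toric kernel $\ker\phi$. This yields $\mathcal{F}(I) \cong R$.

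With the isomorphism in hand, $\dim \mathcal{F}(I) = \dim R$. Since $R$ is a finitely generated $\mathbb{K}$-algebra and a domain (as a subring of $S$), its Krull dimension equals $\mathrm{trdeg}_{\mathbb{K}} \mathrm{Frac}(R)$. The standard fact for monomial subalgebras is that $\{x^{\alpha_{i_1}},\ldots,x^{\alpha_{i_k}}\}$ is algebraically independent over $\mathbb{K}$ if and only if $\alpha_{i_1},\ldots,\alpha_{i_k}$ are $\mathbb{Q}$-linearly independent: a $\mathbb{Z}$-linear dependence among the exponents yields a binomial algebraic relation, while $\mathbb{Q}$-linear independence forces distinct lattice points in $\mathbb{N}\alpha_{i_1} + \cdots + \mathbb{N}\alpha_{i_k}$ to correspond to distinct monomials in $S$, ruling out any polynomial relation. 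Hence $\mathrm{trdeg}_{\mathbb{K}}\mathrm{Frac}(R) = \mathrm{rank}(A)$.

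The main obstacle is the kernel identification in the first step. Without equigeneration, a product of minimal generators of $I^k$ and $I^\ell$ need not be a minimal generator of $I^{k+\ell}$ (it could factor as $x_i$ times another minimal generator), which would collapse additional relations to zero in $\mathcal{F}(I)$ beyond the toric binomials; equigeneration is precisely what prevents this and lets us read off the kernel directly as the toric ideal of $A$.
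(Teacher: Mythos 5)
Your proposal is correct, and in fact the paper offers no proof to compare against: Lemma \ref{ranklemma} is quoted verbatim from \cite{appei}. What you have written is essentially the standard proof of that cited result, namely that for an equigenerated monomial ideal the special fiber ring is isomorphic to the monomial subalgebra $\mathbb{K}[x^{\alpha_1},\ldots,x^{\alpha_r}]$, whose Krull dimension is the rank of the exponent matrix. Both halves of your argument are sound. In the identification $\mathcal{F}(I)\cong\mathbb{K}[x^{\alpha_1},\ldots,x^{\alpha_r}]$ you use equigeneration exactly where it is needed; the only point left implicit (and immediate) is the spanning half of your basis claim, i.e.\ that every monomial of $I^k$ of degree strictly greater than $kd$ lies in $\mathbf{m}I^k$, so that $I^k/\mathbf{m}I^k$ is concentrated in degree $kd$. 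The dimension count is also correct: for the finitely generated domain $R$ one has $\dim R=\mathrm{trdeg}_{\mathbb{K}}\mathrm{Frac}(R)$, a transcendence basis can be chosen from the generating monomials, and your equivalence between algebraic independence of monomials and $\mathbb{Q}$-linear independence of their exponent vectors holds in any characteristic. A small dividend of your argument is that it makes explicit that $\mathrm{rank}\,A$ must be read as the rank over $\mathbb{Q}$ (the integer matrix rank), independent of $\mathrm{char}\,\mathbb{K}$, which is the meaning used later in the paper when this rank is computed from the incidence matrix of the Hasse diagram.
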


Since $I = Q(m)$ is generated by
monomials of the same degree
(see Lemma \ref{degree-gens}), to
compute $\ell(Q(m))$ it is enough
compute the rank of the matrix corresponding
to the degrees of the generators.
The rank of this matrix is 
encoded in $A(m)$, as we now show.

\begin{theorem}\label{maintheorem1}
Let $I = Q(m)$ for some monomial $m$ and
poset $Q$.
Then
$$\ell(I)=|A(m)|-K(A(m))+1$$
where $A(m)$ is the order ideal of $m$
and $K(A(m))$ is the number of connected
components of $A(m)$ as an induced 
subposet of $Q$.
\end{theorem}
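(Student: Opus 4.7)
The plan is to apply Lemma \ref{ranklemma}: since all minimal generators of $Q(m)$ have the common degree $d = \deg(m)$ by Lemma \ref{degree-gens}, we have $\ell(Q(m)) = \mathrm{rank}(A)$, where $A$ is the matrix whose columns are the exponent vectors of the minimal generators. By Lemma \ref{basislemma}, every generator has the form $m\cdot\prod_{t} x_{j_t}/x_{i_t}$ with $x_{j_t} \leq_Q x_{i_t}$ and $x_{i_t} \in \mathrm{supp}(m)$, so only variables in $A(m)$ appear in the generators of $Q(m)$; consequently, $A$ may be viewed as a matrix with rows indexed by $A(m)$.

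Since every column of $A$ has coordinate sum $d$, fixing one column $\alpha^{(1)}$ gives
\[
\mathrm{rank}(A) = 1 + \dim V, \qquad V := \mathrm{span}\bigl\{\alpha - \alpha^{(1)} : \alpha \text{ is a column of } A\bigr\} \subseteq \mathbb{R}^{A(m)}.
\]
Let $W := \mathrm{span}\bigl\{e_j - e_i : x_i, x_j \in A(m),\ x_j \leq_Q x_i\bigr\} \subseteq \mathbb{R}^{A(m)}$. Two elements of $A(m)$ lie in the same connected component of the Hasse diagram of $A(m)$ if and only if they are linked by a zig-zag chain of comparabilities, so a standard incidence-matrix argument (pick a spanning tree in each connected component of the comparability graph, and note that edge vectors from distinct components have disjoint supports and are therefore independent) yields
\[
\dim W = |A(m)| - K(A(m)).
\]

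The core of the proof is to establish $V = W$. The inclusion $V \subseteq W$ is immediate from Lemma \ref{basislemma}: each $\alpha - \alpha^{(1)}$ is a sum of vectors $e_{(i_t,j_t)} = e_{j_t} - e_{i_t}$ with $x_{j_t} \leq_Q x_{i_t}$ and with $x_{i_t},x_{j_t} \in A(m)$. For $W \subseteq V$, I would show that every spanning vector $e_j - e_i$ of $W$ (with $x_j \leq_Q x_i$ in $A(m)$) lies in $V$. The naive approach of applying the move $m \mapsto m\cdot x_j/x_i$ works directly only when $x_i \in \mathrm{supp}(m)$; if $x_i \in A(m)\setminus\mathrm{supp}(m)$, one must route through an ancestor. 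The workaround: pick any $x_k \in \mathrm{supp}(m)$ with $x_i \leq_Q x_k$ (which exists because $x_i \in A(m)$); then $x_j \leq_Q x_i \leq_Q x_k$, so $m \cdot x_i/x_k$ and $m \cdot x_j/x_k$ are both $Q$-Borel moves of $m$, hence generators of $Q(m)$, and their exponent vectors differ by exactly $e_j - e_i$.

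The main obstacle is precisely the case $x_i \notin \mathrm{supp}(m)$, handled by the intermediate-$x_k$ trick above; everything else is either a direct application of Lemma \ref{ranklemma} and Lemma \ref{basislemma} or a standard fact about incidence matrices of graphs. Combining the steps,
\[
\ell(Q(m)) = \mathrm{rank}(A) = 1 + \dim V = 1 + \dim W = |A(m)| - K(A(m)) + 1,
\]
as claimed.
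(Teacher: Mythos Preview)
Your proof is correct and follows essentially the same route as the paper: reduce $\ell(Q(m))$ to $\mathrm{rank}(A)$ via Lemma~\ref{ranklemma}, split off the affine direction to get $\mathrm{rank}(A)=1+\dim V$, identify $V$ with a span of ``edge vectors'' $e_j-e_i$ over comparabilities inside $A(m)$, and compute that dimension as $|A(m)|-K(A(m))$ from the standard incidence-matrix fact. The only cosmetic difference is that the paper indexes its span by comparabilities with $i\in\mathrm{supp}(m)$ and then matches this with the column space of the Hasse incidence matrix via orthogonality to the component-indicator vectors $v_C$ (citing \cite[Theorem~8.3.1]{GR}), whereas you take $W$ over all comparabilities in $A(m)$ and prove $W\subseteq V$ directly by your two-move trick through an ancestor $x_k\in\mathrm{supp}(m)$; your version is arguably more explicit at the one nontrivial step.
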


\begin{proof} 
We can write $I = Q(m)$  
as $I = \langle x^{\alpha_1},
\ldots,x^{\alpha_r} \rangle$ where
$\{x^{\alpha_1},\ldots,x^{\alpha_r}\}$
are the minimal generators,
and $m = x^{\alpha_r}$.
By
Lemma \ref{degree-gens}, the generators all
have the same degree.

Let $A = \begin{bmatrix} \alpha_1 &
\cdots & \alpha_r \end{bmatrix}$ be
the $n \times r$ matrix where the
$i$-th column is given by $\alpha_i$.  
By Lemma \ref{ranklemma} we need
to compute ${\rm rank}(A)$, or equivalently,
the rank of the matrix
\[A' = \begin{bmatrix}
\alpha_1-\alpha_r & \alpha_2-\alpha_r &
\cdots & \alpha_{r-1}-\alpha_r & 
\alpha_r \end{bmatrix}\]
because the column space of 
$A$ and $A'$ is the same.

For all $x_j \leq_Q x_i$, let
$e_{(i,j)} \in \mathbb{N}^n$ denote the vector
defined in
\eqref{defn-eij}.  Note that $x^{\alpha_k}$ is the monomial
obtained from $m = x^{\alpha_r}$ via a series of $Q$-Borel moves.
In particular by Lemma \ref{basislemma} there exists vectors $e_{(i_1,j_1)},e_{(i_2,j_2)},\ldots,e_{(i_l,j_l)}$
with $i_t \in {\rm supp}(x^{\alpha_r})$ for $t=1,\ldots,l$ such that 
$$\alpha_r+e_{(i_1,j_1)} + \cdots + e_{(i_l,j_l)} = \alpha_k.$$
Thus $\alpha_k-\alpha_r\in\mathrm{Span}\{e_{(i,j)}\ |\ i\in\mathrm{supp}(x^{\alpha_r}) ~~\mbox{and}~~ x_j \leq_Q x_i\}$ for any $1\leq k\leq r-1$.  Because $\alpha_r+e_{(i,j)}$ is a column of $A$ for any $i\in\mathrm{supp}(x^{\alpha_r})$ and $x_j \leq_Q x_i$, the vectors
$e_{(i,j)}$ appear as columns of $A'$.
This implies that 
$$\mathrm{rank}\ A'=1+\dim_{\mathbb{K}} (\mathrm{Span}\{e_{(i,j)}\ |\ i\in\mathrm{supp}(x^{\alpha_r}) ~~\mbox{and}~~ x_j \leq_Q x_i\}).$$
where the $1$ corresponds to the column corresponding to $\alpha_r$.

Consider the order ideal $A(x^{\alpha_r})$ and view it as an induced poset of of $Q$.
Let $B$ denote the incidence matrix of the Hasse diagram associated to
$A(x^{\alpha_r})$.  That is, $B$ is the matrix 
whose rows are indexed
by the elements of $A(x^{\alpha_r})$ and whose columns
are indexed by the directed edges in $A(x^{\alpha_r})$.
Furthermore,
in the column indexed by the  edge of $A(x^{\alpha_r})$ between
$x_j$ and $x_i$ with
$x_j <_Q x_i$, we put a $-1$ in the row
indexed by $x_i$ and a $1$ in the row indexed
by $j$.

It follows from the proof of \cite[Theorem 8.3.1]{GR} that the kernel of $B$ is generated by the vectors $v_C=\sum_{x_i\in C} e_i$ where $C$ is a connected component of $A(m)$. Given that the columns of $B$ belong to $\mathrm{Span}\{e_{(i,j)}\ |\ i\in\mathrm{supp}(x^{\alpha_r}) ~~\mbox{and}~~ x_j \leq_Q x_i\}$ and the generators of this space are orthogonal to the elements in $\{v_C\ |\ C\mbox{ is a connected component of }A(m)\}$, then
$$\mathrm{Col}(B)=\mathrm{Span}\{e_{(i,j)}\ |\ i\in\mathrm{supp}(x^{\alpha_r}) ~~\mbox{and}~~ x_j \leq_Q x_i\}.$$

Thus, $$\ell(Q(m))=\mathrm{rank}\ A=\mathrm{rank}\ A'=1+\mathrm{rank}\ B$$ and from \cite[Theorem 8.3.1]{GR} we know $\mathrm{rank}\ B=|A(m)|-K(A(m))$ from where we obtain the desired conclusion.
\end{proof}

Before considering square-free 
principal $Q$-Borel ideals, we make a brief
aside to differentiate our work from that
of Herzog and Qureshi \cite{herzogas}.
As shown in \cite{herzogas}, 
the analytic spread of a polymatroidal ideal
(\cite[Definition 2.3]{herzogas}) can
be computed via the linear relation graph of the ideal.

\begin{definition}
 Let $G(I)=\{m_1,\ldots,m_s\}$ be the minimal
 generators of a monomial ideal $I$. 
 The \emph{linear relation graph} $\Gamma$ of $I$ is the graph with edge set
 $$E=\{\{i,j\}\ |\ \text{there exists}\ m_k,m_l\in G(I)\text{ such }x_im_k=x_jm_l\}$$
 and vertex set $V=\bigcup_{\{i,j\}\in E}\{i,j\}$.
\end{definition}

The analytic spread of a polymatroidal ideal is related to its linear
relation graph.

\begin{lemma}{\cite[Lemma 4.2]{herzogas}}\label{herzoglemma}
    Let $I$ be a polymatroidal ideal 
    with linear relation graph $\Gamma$. 
    If $r$ is the number of vertices of $\Gamma$ 
    and $s$ is the number of connected components of $\Gamma$, then
    $$\ell(I)=r-s+1.$$
\end{lemma}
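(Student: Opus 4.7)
The plan is to adapt the strategy used in the proof of Theorem \ref{maintheorem1}. Since a polymatroidal ideal $I$ is generated in a single degree, Lemma \ref{ranklemma} applies: $\ell(I) = \mathrm{rank}(A)$, where $A$ is the matrix whose columns are the exponent vectors $\alpha_1, \ldots, \alpha_t$ of the minimal generators of $I$. Subtracting the last column from all the others shows that $\mathrm{rank}(A) = 1 + \mathrm{rank}(A'')$, where $A''$ is the matrix with columns $\alpha_i - \alpha_t$ for $i = 1, \ldots, t-1$. So it suffices to show that $\mathrm{rank}(A'') = r - s$.

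Next, I would identify the column space of $A''$ with the column space of the signed incidence matrix of $\Gamma$. The definition of the linear relation graph supplies one inclusion immediately: an edge $\{i,j\}$ of $\Gamma$ comes from generators $m_k, m_l$ with $x_i m_k = x_j m_l$, so $e_i - e_j = \alpha_l - \alpha_k = (\alpha_l - \alpha_t) - (\alpha_k - \alpha_t)$ lies in the column space of $A''$.

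The reverse inclusion is where the polymatroidal hypothesis is essential, and it is the main obstacle. Here I would invoke the symmetric exchange property of polymatroidal ideals: for any two minimal generators $x^\alpha, x^\beta$ of $I$ and any index $p$ with $\alpha_p > \beta_p$, there exists $q$ with $\alpha_q < \beta_q$ such that $x_q x^\alpha / x_p$ is again a minimal generator. The identity $x_p \cdot (x_q x^\alpha / x_p) = x_q \cdot x^\alpha$ certifies that $\{p,q\}$ is an edge of $\Gamma$. Iterating this property produces a sequence of minimal generators from $x^{\alpha_i}$ to $x^{\alpha_t}$ in which each consecutive step changes the exponent vector by some $e_q - e_p$ with $\{p,q\} \in E(\Gamma)$; telescoping expresses $\alpha_i - \alpha_t$ as an integer combination of such elementary vectors. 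Hence $\mathrm{Col}(A'') = \mathrm{Span}\{e_i - e_j \mid \{i,j\} \in E(\Gamma)\}$.

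The final step is the standard graph-theoretic fact invoked in the proof of Theorem \ref{maintheorem1} (see \cite[Theorem 8.3.1]{GR}): the signed incidence matrix of a graph with $r$ vertices and $s$ connected components has rank $r - s$. Combining the pieces, $\ell(I) = 1 + (r - s) = r - s + 1$. One care point along the way is that variables in the support of some generator but not in any linear syzygy contribute a zero row to $A''$ and are not vertices of $\Gamma$, so they are silently accounted for on both sides of the formula; this follows from the exchange property, which guarantees that any variable appearing with non-constant multiplicity across $G(I)$ must participate in some edge of $\Gamma$.
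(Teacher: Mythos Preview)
The paper does not prove this lemma; it is quoted verbatim from \cite[Lemma 4.2]{herzogas} and invoked only as a point of comparison with Theorem \ref{maintheorem1}. There is therefore no argument in the paper against which to measure your proposal.

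For what it is worth, your outline is correct. The reduction via Lemma \ref{ranklemma} to the rank of the exponent matrix, the observation that $\alpha_t$ is independent of the differences $\alpha_i-\alpha_t$ because its coordinate sum is nonzero, the identification of $\mathrm{Col}(A'')$ with $\mathrm{Span}\{e_i-e_j : \{i,j\}\in E(\Gamma)\}$ via iterated symmetric exchange (each step dropping the $\ell_1$-distance to $\alpha_t$ by $2$, so the process terminates), and the incidence-matrix rank formula \cite[Theorem 8.3.1]{GR} together yield $\ell(I)=r-s+1$. Your closing remark that any variable with non-constant exponent across $G(I)$ must lie on an edge of $\Gamma$ is exactly what makes the vertex count $r$ match up, and it does follow from a single application of the exchange property.
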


As shown in \cite[Proposition 2.9]{qborel},
a principal $Q$-Borel ideal $I=Q(m)$ is a polymatroidal
ideal.  Consequently, one can compute
$\ell(Q(m))$ via Lemma \ref{herzoglemma}.
However, our Theorem \ref{maintheorem1} has
the advantage of expressing the 
analytic spread in terms of the poset $Q$
and order ideal $A(m)$.   As the next 
example shows, we do not necessarily have
$|A(m)|=r$ and $K(A(m))=s$, with $r$ and $s$ as 
in Lemma \ref{herzoglemma}.

\begin{example}
    Consider $S=\mathbb{K}[x_1,x_2,x_3]$, and let
    our poset $Q$ on $\{x_1,x_2,x_3\}$
    have Hasse diagram
    \begin{center}
\begin{tikzpicture}[scale=1, vertices/.style={draw, fill=black, circle, minimum size=3pt, inner sep=0pt}]

               \node [vertices, label=right:{${x}_{2}$}] (1) at (-2.25+0,1.33333){};
               \node [vertices, label=right:{${x}_{1}$}] (2) at (-2.25+1.5,1.33333){};
               \node [vertices, label=right:{${x}_{3}$}] (3) at (-.75+0,2.66667){};
     \foreach \to/\from in {3/2}
       \draw [-] (\to)--(\from);
       \end{tikzpicture}
      \end{center}
    Consider $I=Q(x_2x_3) = \langle x_1x_2,x_2x_3 \rangle$.  Then $|A(x_2x_3)| = 3$ and 
    $K(A(x_2x_3)) =2$.  However, the linear
    relation graph $\Gamma$ of $I$ contains
    the single edge $\{1,3\}$ since 
    $x_3(x_1x_2) = x_1(x_2x_3)$ is the only
    linear relation among the generators of $I$.  
    So $r = 2$ and $s=1$.
   \end{example}
 
 In light of the above example, 
 it is natural to ask if there is any connection between
 $A(m)$ and the linear relation graph 
 $\Gamma$ of the principal $Q$-Borel ideal $I=Q(m)$. 
 This relationship is explained in the following theorem.

\begin{theorem}\label{transclosure} 
    Fix a poset $Q$ on $X=\{x_1,\ldots,x_n\}$ and take $m\in S$ a monomial. Let $I=Q(m)$ and let $\Gamma$ be its linear relation graph. Consider $H$, the Hasse diagram of $A(m)$, but as an undirected graph; that is, the
    vertex set is $V(H)=A(m)$ and $\{x_i,x_j\}\in E(H)$ is an
    edge if $x_i<_Q x_j$ or $x_j<_Q x_i$ and there is no element $y\in X$ with $x_i <_Q y <_Q x_j$ or
    $x_j <_Q y < x_i$.
    
    Then $\Gamma$ is the transitive closure of $H$ after removing the isolated vertices of $H$.
\end{theorem}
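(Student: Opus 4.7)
The plan is to verify that $\Gamma$ and the transitive closure of $H$ (with isolated vertices of $H$ removed) have the same edge set and the same vertex set. Throughout, I will use that $\{i,j\}\in \Gamma$ iff there exist $g,g'\in G(I)$ with $g'=g\cdot x_i/x_j$, i.e., $x_j\mid g$ and $x_i\mid g'$.

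\textbf{Forward direction ($\Gamma \subseteq$ transitive closure).} Suppose $\{i,j\}\in\Gamma$ with witnesses $g,g'$. By Lemma \ref{Lemma.maxdecomp}, $Q(m)=Q(m_1)\cdots Q(m_r)$ where $m_1,\ldots,m_r$ are the maximal connected components of $m$; by Lemma \ref{lemma.maxfactors} the order ideals $A(m_1),\ldots,A(m_r)$ are pairwise disjoint and are precisely the connected components of $A(m)$. Partitioning variables by component yields a unique factorization $g=h_1\cdots h_r$ with each $h_s\in G(Q(m_s))$, since minimality of $\deg g = \sum_s \deg m_s$ forces $\deg h_s=\deg m_s$. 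If $x_i\in A(m_s)$ and $x_j\in A(m_t)$ with $s\neq t$, the exchange $g'=g\cdot x_i/x_j$ would add a variable to $h_s$ and remove one from $h_t$, contradicting Lemma \ref{degree-gens}. Hence $s=t$, so $x_i,x_j$ lie in a common connected component of $A(m)$.

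\textbf{Reverse direction (transitive closure $\subseteq \Gamma$).} Suppose $x_i,x_j \in A(m)$ with $i\neq j$ lie in the same connected component of $H$, and fix a Hasse path $x_i=z_0\sim z_1\sim\cdots\sim z_k=x_j$. For each cover $(z_s,z_{s+1})$ the larger element is in $A(m)$ and hence $\leq_Q$ some $u_s\in\mathrm{supp}(m)$, so both $z_s,z_{s+1}\leq_Q u_s$. When $u_s=u_{s'}$ for $s<s'$, I would shortcut by deleting the intermediate elements $z_{s+1},\ldots,z_{s'}$: the new adjacent pair $(z_s,z_{s'+1})$ still has common upper $u_s=u_{s'}$. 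Iterating, I may assume $u_0,\ldots,u_{k-1}$ are pairwise distinct. Then set
\[
g=\Bigl(\prod_{s=0}^{k-1}z_{s+1}\Bigr)\cdot\frac{m}{\prod_{s=0}^{k-1}u_s},\qquad g'=\Bigl(\prod_{s=0}^{k-1}z_s\Bigr)\cdot\frac{m}{\prod_{s=0}^{k-1}u_s}.
\]
Using the factorization $Q(m)=\prod_{\ell\in\mathrm{supp}(m)}P_\ell^{a_\ell}$ with $P_\ell=Q(x_\ell)=\langle x_k : x_k\leq_Q x_\ell\rangle$, each $z_{s+1}$ (respectively $z_s$) contributes a copy of $P_{u_s}$ because $z_{s+1},z_s\leq_Q u_s$, while $m/\prod u_s$ supplies the remaining $P_\ell^{a_\ell}$'s (the distinctness of the $u_s$'s ensures $m$ has enough copies of each $u_s$ to spare). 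Both $g,g'$ lie in $Q(m)$ and have degree $\deg m$, so they are minimal generators. A telescoping computation gives $x_ig=\bigl(\prod_{s=0}^{k}z_s\bigr)\cdot(m/\prod u_s)=x_jg'$, so $\{i,j\}\in\Gamma$.

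\textbf{Vertices and main obstacle.} The forward direction places any $i\in V(\Gamma)$ in a component of $H$ together with some $j\neq i$, so $x_i$ is non-isolated in $H$; conversely, for non-isolated $x_i$ the reverse direction applied to any Hasse neighbor produces an edge of $\Gamma$ at $i$, so the vertex sets match. The principal difficulty is the reverse direction, namely realising a multi-step Hasse path as a single generator exchange $x_ig=x_jg'$. The shortcut reduction to distinct $u_s$'s is the crucial technical step, since otherwise the construction could overload a slot of $m$ whose multiplicity is only one.
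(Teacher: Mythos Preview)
Your proof is correct and takes a genuinely different route from the paper in both directions. For $E(\Gamma)\subseteq$ (transitive closure of $H$), the paper writes each witness generator as a sequence of $Q$-Borel moves from $m$ via Lemma~\ref{basislemma}, obtains $e_j-e_i$ as a signed sum of edge vectors $e_{(a,b)}$, and reads off a walk in $H$; you instead invoke the connected-component factorisation $Q(m)=\prod_s Q(m_s)$ (Lemma~\ref{Lemma.maxdecomp}) and use the disjointness of the $A(m_s)$ to pin $x_i,x_j$ to a single component, which is shorter and more structural. For the reverse containment, the paper only records the easy fact $E(H)\subseteq E(\Gamma)$ and never explicitly argues that $\Gamma$ is transitively closed, so the passage from Hasse edges to the full transitive closure is left implicit; your explicit construction of $g,g'$ realising an arbitrary same-component pair as a \emph{single} exchange $x_ig=x_jg'$ closes this gap directly. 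The shortcut reduction to pairwise distinct $u_s$ is indeed the crux, since it is exactly what guarantees $\prod_s u_s\mid m$ and hence that $g,g'$ are honest degree-$\deg m$ elements of $Q(m)=\prod_{\ell\in\mathrm{supp}(m)} Q(x_\ell)^{a_\ell}$.
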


\begin{proof}
    First, it is clear that $E(H)\subseteq E(\Gamma)$.  Also $V(H)\setminus V(\Gamma)$ is precisely the set of isolated vertices of $H$. Now, take $\{i,j\}\in E(\Gamma)$. Then there exists $x^{\alpha},x^{\beta}\in G(I)$ such that
    $$e_i+\alpha=e_j+\beta.$$
    
    But $x^\alpha,x^\beta$ are also $Q$-Borel movements of $m =x^\nu$. Then, by Lemma \ref{basislemma} there exists $i_1,\ldots,i_t\in\mathrm{supp}(m)$, $j_1,\ldots, j_t$ with $x_{j_k}<_Q x_{i_k}$ for $1\leq k\leq t$ and $\{i'_1,\ldots,i'_s\}\in\mathrm{supp}(m)$, $j'_1,\ldots, j'_s$ with $x_{j'_k}<_Q x_{i'_k}$, $1\leq k\leq s$, such that:
    $$\nu=\alpha+\sum_{k=1}^t e_{(i_k,j_k)}=\beta+\sum_{k=1}^s e_{(i'_k,j'_k)}$$
    and then
    $$e_j-e_i=\sum_{k=1}^s e_{(i'_k,j'_k)}-\sum_{k=1}^t e_{(i_k,j_k)}.$$
    
    But this means that there is a path from $i$ to $j$ along the vertices of $H$ and then $\{i,j\}$ is in the transitive closure of $H$.
\end{proof}

\begin{remark}
    The previous theorem implies that if $c$ is the
    number of isolated vertices of $A(m)$, then
    $|A(m)| = r+c$ and $K(A(m))=s+c$ where 
    $r$ and $s$ are as in Lemma \ref{herzoglemma}.  Using the
    fact that a principal $Q$-Borel ideal is a polymatroidal
    ideal, we could then use Lemma \ref{herzoglemma} and 
    Theorem \ref{transclosure} to give a different
    proof of Theorem \ref{maintheorem1}.  In particular,
    if $\Gamma$ is the linear relation graph of
    $Q(m)$, we have
    $$\ell(Q(m)) = r-s+1 = (r+c)-(s+c)+1 = |A(m)|-K(A(m))+1.$$
    Our proof of Theorem \ref{maintheorem1} avoids using
    the polymatroidal property.
\end{remark}

Our analysis of the square-free principal
$Q$-Borel case is similar to the principal
$Q$-Borel case.
We require the following notation.
Suppose that $Q$ is a poset on $X = \{x_1,\ldots,x_n\}$.  
If  $Y = \{x_{j_1},\ldots,x_{j_s}\}$ is a subset of
$X$, then $Q$ induces a poset $Q'$
on $Y$ if we define $x_j <_{Q'} x_i$ if $x_j <_Q  x_i$.  

If $m$ is a monomial only in the
variables of $Y$, then we write $A_{Q}(m)$ or $A_{Q'}(m)$ if
wish to view the order ideal in $Q$ on the set $X$ or in $Q'$
on the set $Y$.  Similarly, we write $sfQ(m)$ or $sfQ'(m)$, and $Q(m)$ and $Q'(m)$ if we wish to denote which partial order and ground set we are using.

\begin{theorem}\label{sqfanalyticspread}  
    Fix a poset $Q$ on $X = \{x_1,\ldots,x_n\}$ and suppose that  $m \in S$ is a square-free monomial. Let $m' = {\rm gcd}(G(sfQ(m)))$ be the greatest
    common divisor of all the generators of the 
    square-free principal $Q$-Borel ideal $I = sfQ(m)$.
    Then 
    \[\ell(I) = \ell(Q'(m/m')) = |A_{Q'}(m/m')| - K(A_{Q'}(m/m')) +1, \]
    where $Q'$ is the induced poset on $Y= X \setminus \{x_j ~|~ j \in {\rm supp}(m')\}$.
 \end{theorem}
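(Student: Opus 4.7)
The plan is to reduce to Theorem \ref{maintheorem1} by factoring out the common divisor $m'$. First, I would establish the ideal identity
\[
sfQ(m) = m' \cdot sfQ'(m/m'),
\]
where $sfQ'(m/m')$ is the square-free principal $Q'$-Borel ideal in $\mathbb{K}[Y]$. The containment $\supseteq$ is immediate, since any square-free $Q'$-Borel move $h$ of $m/m'$ uses only variables in $Y$, so $m'h$ is square-free and arises from $m$ via the same moves in $Q$. For $\subseteq$, if $g \in G(sfQ(m))$, then $m' \mid g$, $\deg g = \deg m$, and square-freeness together force each variable of $\mathrm{supp}(m')$ to have exponent exactly $1$ in $g$. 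Writing $g = m \cdot \prod x_{j_k}/x_{i_k}$ via Lemma \ref{basislemma}, this yields the balance condition $|\{k : j_k = s\}| = |\{k : i_k = s\}|$ for each $s \in \mathrm{supp}(m')$, which allows one to cancel moves through $\mathrm{supp}(m')$ in pairs (replacing $(i_k, s)$ and $(s, j_{k'})$ by the composite $(i_k, j_{k'})$) until every surviving move involves only variables in $Y$, showing $g/m' \in sfQ'(m/m')$. Since the special fiber cones of $sfQ'(m/m')$ and $m' \cdot sfQ'(m/m')$ are isomorphic as graded $\mathbb{K}$-algebras via multiplication by powers of $m'$, the identity yields $\ell(sfQ(m)) = \ell(sfQ'(m/m'))$.

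Setting $\tilde{m} = m/m'$, the remaining goal is to show $\ell(sfQ'(\tilde{m})) = \ell(Q'(\tilde{m}))$; Theorem \ref{maintheorem1} applied to $Q'(\tilde{m})$ in $\mathbb{K}[Y]$ then supplies the formula. Note that $\gcd(G(sfQ'(\tilde{m}))) = 1$ by construction. Both ideals are generated in the single degree $\deg(\tilde{m})$, so by Lemma \ref{ranklemma} their analytic spreads equal $1 + \dim W$ and $1 + \dim V$ respectively, where $W \subseteq V$ are the $\mathbb{K}$-spans of the difference vectors $\alpha_g - \alpha_{\tilde{m}}$ as $g$ ranges over the generators of $sfQ'(\tilde{m})$ and $Q'(\tilde{m})$. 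By the argument in the proof of Theorem \ref{maintheorem1}, $V = \mathrm{Span}\{e_{(i,j)} : i \in \mathrm{supp}(\tilde{m}),\ x_j \leq_{Q'} x_i\}$, so the task reduces to placing each such $e_{(i,j)}$ into $W$. When $x_j \nmid \tilde{m}$, the monomial $\tilde{m} x_j/x_i$ is a square-free generator of $sfQ'(\tilde{m})$ and $e_{(i,j)}$ is an immediate difference.

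The main obstacle is the case $j \in \mathrm{supp}(\tilde{m})$, which I would handle via the following claim: if $\gcd(G(sfQ'(\tilde{m}))) = 1$ and $j \in \mathrm{supp}(\tilde{m})$, then there exists $x_t \in Y$ with $x_t <_{Q'} x_j$ and $x_t \notin \mathrm{supp}(\tilde{m})$. Granting this, both $\tilde{m} x_t/x_i$ and $\tilde{m} x_t/x_j$ are square-free generators of $sfQ'(\tilde{m})$, so $e_{(i,t)}, e_{(j,t)} \in W$, and $e_{(i,j)} = e_{(i,t)} - e_{(j,t)} \in W$. To prove the claim, set $U_j = \{x_s \in Y : x_s \leq_{Q'} x_j\}$ and suppose for contradiction that $U_j \subseteq \mathrm{supp}(\tilde{m})$. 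Consider the quantity
\[
E(M) = \sum_{x_s \in U_j} \mathrm{exp}(x_s, M).
\]
Any $Q'$-Borel move $(a,b)$ satisfies $x_b \leq_{Q'} x_a$; if $a \in U_j$, then $x_b \leq_{Q'} x_a \leq_{Q'} x_j$ forces $b \in U_j$, so no move can decrease $E$. Starting from $E(\tilde{m}) = |U_j|$, every monomial in the $Q'$-Borel closure of $\tilde{m}$ thus satisfies $E \geq |U_j|$. But a square-free $g$ with $x_j \nmid g$ has $E(g) = |\mathrm{supp}(g) \cap U_j| \leq |U_j| - 1$, contradicting monotonicity. Therefore $x_j$ divides every square-free generator of $sfQ'(\tilde{m})$, violating $\gcd = 1$, and the claim follows.
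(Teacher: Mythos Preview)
Your proof is correct and follows the same two–step strategy as the paper: first factor $sfQ(m)=m'\cdot sfQ'(m/m')$ to reduce to the case $\gcd(G(sfQ'(\tilde m)))=1$, and then show that the exponent matrices of $sfQ'(\tilde m)$ and $Q'(\tilde m)$ have the same column space by realizing each $e_{(i,j)}$ as a difference of square-free generators. The only notable variation is in how you obtain the auxiliary variable $x_t<_{Q'}x_j$ with $x_t\nmid\tilde m$: the paper simply takes $x_t$ to be a \emph{minimal} element of $A_{Q'}(\tilde m)$ lying below $x_j$ and observes that such a minimal element cannot divide $\tilde m$ (else it would divide every square-free generator, contradicting $\gcd=1$), whereas you establish the same conclusion via the monotone invariant $E$. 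Both arguments are valid; the minimal-element observation is shorter, while your invariant argument is more self-contained.
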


\begin{proof}
Let $m = x^\alpha=x_{i_1}x_{i_2}\cdots x_{i_s}$ and
$m' = x^\delta = x_{j_1}\cdots x_{j_t}$.  Since $m'$ is
the greatest common divisor of all the generators, $m'|m$.  Furthermore,
suppose $x_j|m'$, and thus $x_j|m$.  If $x_k <_Q x_j$, then
$\frac{x_k}{x_j}m \not\in I$ because otherwise we would have a generator
of $I$ not divisible by $x_j$.  If $x_j <_Q x_i$ and $x_i|m$,
then $\frac{x_j}{x_i}m$ is a $Q$-Borel move of $m$, but
it is not in $I$ since this monomial is not square-free.  Thus, $x\in A(m')$ implies that $A(x)=\{x\}$ or for any $y\in Q\setminus\{x\}$ comparable to $x$, the corresponding $Q$-Borel movement is not in $sfQ(m)$.


We first consider the case that $m'=1$.  Note that
this means that every $x_i$ that divides $m$ is not a minimal
element of $A(m)$.  Indeed, if $x_i$ is a minimal element, then
$x_i$ would appear in every generator of $I$, contradicting the
fact $m'=1$.  

Set $I = sfQ(m)$ and $J = Q(m)$. Let $A$ be the matrix whose column entries have the
form $\beta$ where $x^\beta$ is a generator of $I$,
and similarly, let $B$ be the matrix whose columns
have the form $\gamma$ where $x^\gamma$ is
a generator of $J$.  By Lemma \ref{ranklemma}
and Theorem \ref{maintheorem1} we have
\[\ell(I) = {\rm rank}(A) \leq {\rm rank}(B) =
\ell(J) = |A(m)|- K(A(m))+1.\]
The inequality follows from the fact that all of 
the columns of $A$ are in $B$.

Fix any $i \in {\rm supp}(m)$ (and thus, $x_i|m$), and 
suppose $x_j <_Q x_i$.
If $x_j \nmid m$, then $\frac{x_j}{x_i}m\in G(I)$, and therefore $\alpha+e_{(i,j)}$ is a column of $A$.
Since $\alpha$ is a column of $A$, we have
$e_{(i,j)} \in {\rm Col}(A)$.   If $x_j$ also 
divides $m$, then there exists a minimal element 
$x_k<_Q x_j$.  Since $x_k$ is minimal, our
hypotheses imply that $x_k \nmid m$. But then
$$\alpha+e_{(i,k)}~~\mbox{and}~~ \alpha+e_{(j,k)}$$
are columns of $A$, and then $e_{(i,k)}-e_{(j,k)}=e_{(i,j)}$ is in $\mathrm{Col}(A)$. But from Theorem \ref{maintheorem1} we have
$$\mathrm{Col}(B)=\mathrm{Span}(\{\alpha\}\cup\{e_{(i,j)}\ |\ i\in\mathrm{supp}(m), x_j \leq_Q x_i\})\subset\mathrm{Col}(A).$$
Consequently, ${\rm rank}(B) \leq {\rm rank}(A)$, giving
the desired result.

Now suppose that $m'=x^\delta = x_{j_1}\cdots x_{j_k}  > 1$.
Since every generator of $sfQ(m)$ is divisible by 
$m'$, we have $$I=sfQ'(m)=m'\cdot sfQ'(m/m').$$
If $A$, respectively $B$, is the matrix whose 
columns have the form
$\gamma$ with $x^\gamma$ a generator of $sfQ'(m)$,
respectively, $sfQ'(m/m')$, we can use Lemma \ref{ranklemma}
and the proof of Theorem \ref{maintheorem1} to show that
${\rm rank}(A) = {\rm rank}(B)$;  in particular,
one needs to verify
\begin{eqnarray*}
\dim\ \mathrm{Col}(A)
  &=&\dim\mathrm{Span}(\{\delta+\beta\}\cup\{e_{(i,j)}\in \mathbb{N}^{|Q|}\ |\ i\in\mathrm{supp}(m/m')~~\mbox{and}~~ x_j\leq_{Q'} x_i\}) \\
         &=&\dim\mathrm{Span}(\{\beta\}\cup\{e_{(i,j)} 
         \in \mathbb{N}^{|Q'|}\ |\ i\in\mathrm{supp}(m/m')~~\mbox{and}~~ x_j\leq_{Q'} x_i\})\\
        &=&\dim\ \mathrm{Col}(B)
        \end{eqnarray*}
where $m = x^{\delta+\beta}$ and $m/m' = x^\beta$.
Consequently
 $$\ell(I) = {\rm rank}(A) = {\rm rank}(B) = \ell(sfQ'(m/m')) = \ell(Q'(m/m'))$$
where the last equality follows from
the first part of the proof.
\end{proof}


\begin{example}
We illustrate the above result.
Let $Q$ be the poset with Hasse diagram
\begin{center}
\begin{tikzpicture}[scale=1, vertices/.style={draw, fill=black, circle, minimum size=3pt, inner sep=0pt}]
               \node [vertices, label=right:{${x}_{1}$}] (1) at (-1,-2){};
               \node [vertices,label=right:$x_3$] (3) at (-1,-1){};
               \node [vertices, label=right:{${x}_{2}$}] (2) at (1,-2){};
               \node [vertices, label=right:{${x}_{4}$}] (4) at (1,-1){};
               \node [vertices, label=right:{${x}_{5}$}] (5) at (0,0){};
               \node [vertices, label=right:{${x}_{6}$}] (6) at (0,1){};
     \foreach \to/\from in {1/3, 2/4, 3/5, 4/5, 5/6}
       \draw [-] (\to)--(\from);
       \end{tikzpicture}
      \end{center}
Let $m=x_1x_2x_3x_6$ and $I=sfQ(m)$, and thus
 $$I=\langle x_1x_2x_3x_6,x_1x_2x_3x_5,x_1x_2x_3x_4\rangle.$$
We have $\mathrm{gcd}(G(I))=x_1x_2x_3$.  Therefore $Q'$ 
is the poset on $\{x_4,x_5,x_6\}$ with Hasse diagram
      \begin{center}
\begin{tikzpicture}[scale=1, vertices/.style={draw, fill=black, circle, minimum size=3pt, inner sep=0pt}]
               \node [vertices, label=right:{${x}_{4}$}] (4) at (1,-1){};
               \node [vertices, label=right:{${x}_{5}$}] (5) at (0,0){};
               \node [vertices, label=right:{${x}_{6}$}] (6) at (0,1){};
     \foreach \to/\from in { 4/5, 5/6}
       \draw [-] (\to)--(\from);
       \end{tikzpicture}
      \end{center}
\noindent
Hence $\ell(I)=\ell(Q'(x_6))=3$.
\end{example}

\begin{remark}
It can be shown that $m' = {\rm gcd}(G(I))$ in
Theorem \ref{sqfanalyticspread} is the largest monomial (by 
degree) that divides $m$ such that 
\[\{x_j ~|~ j \in {\rm supp}(m')\} = A(m').\]  That
is, the variables that divide $m'$  form an order ideal.
Returning to the above example, note that
$\{x_j ~|~ j \in {\rm supp}(x_1x_2x_3)\}  =
\{x_1,x_2,x_3\} = A(x_1x_2x_3)$ in the poset
$Q$.  Note that if no such monomial
exists, we use the convention that
$A(1) = \emptyset$.

\end{remark}

Using the above interpretation of $m'$,
we have the following corollary, which
uses the following terminology.
Given a poset $Q$ on $\{x_1,\ldots,x_n\}$, the \emph{minimal elements}
of $\{x_1,\ldots,x_n\}$ are those $x_i$ that are minimal
with respect to the partial order on $Q$.

\begin{corollary}
    Fix a poset $Q$ and suppose that  $m \in S$ is a square-free monomial.  
    Suppose $\{x_j ~|~ j \in {\rm supp}(m)\}$
    contains no minimal elements of $Q$.
    If $I =sfQ(m)$, then 
    \[\ell(I) = \ell(Q(m)) = |A(m)| - 
    K(A(m)) +1. \]
\end{corollary}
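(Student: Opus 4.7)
The plan is to reduce everything to Theorem \ref{sqfanalyticspread} by showing that, under the hypothesis on $\mathrm{supp}(m)$, the monomial $m' := \gcd(G(sfQ(m)))$ must equal $1$. Once this is established, Theorem \ref{sqfanalyticspread} gives $Q'=Q$ and $m/m'=m$, so $\ell(I)=\ell(Q'(m/m'))=\ell(Q(m))$, and then Theorem \ref{maintheorem1} supplies the combinatorial formula $\ell(Q(m)) = |A(m)|-K(A(m))+1$.

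The key step is therefore to verify $m'=1$, and here I would appeal directly to the remark immediately preceding the corollary: $m'$ is characterized as the largest monomial dividing $m$ with the property that $\{x_j \mid j \in \mathrm{supp}(m')\} = A(m')$, i.e., whose support is itself an order ideal of $Q$. Arguing by contradiction, suppose $m'\neq 1$. Then $\mathrm{supp}(m')$ is a nonempty order ideal of $Q$. Pick any $x_i$ dividing $m'$; starting from $x_i$ and repeatedly descending in $Q$ (which must terminate since $Q$ is finite), we obtain a minimal element $x_k$ of $Q$ with $x_k \leq_Q x_i$. Because $\mathrm{supp}(m')$ is an order ideal, $x_k \in \mathrm{supp}(m')$. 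Since $m'\mid m$, we have $\mathrm{supp}(m')\subseteq \mathrm{supp}(m)$, so $x_k \in \mathrm{supp}(m)$, contradicting the hypothesis that $\mathrm{supp}(m)$ contains no minimal elements of $Q$.

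Having derived $m'=1$, the induced poset $Q'$ on $X \setminus \{x_j \mid j \in \mathrm{supp}(m')\}$ is all of $Q$, and $m/m'=m$. Substituting into Theorem \ref{sqfanalyticspread} gives $\ell(I) = \ell(Q'(m/m')) = \ell(Q(m))$, and Theorem \ref{maintheorem1} then delivers $\ell(Q(m)) = |A(m)| - K(A(m)) + 1$, which completes the argument.

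I do not anticipate any serious obstacle: the real content is packaged into Theorem \ref{sqfanalyticspread} together with the characterization of $m'$ stated in the preceding remark. The only care needed is in the descending-chain argument, to be sure that the minimal element reached in $Q$ (rather than merely a minimal element of some subposet) lies in $\mathrm{supp}(m')$, and this is immediate from the order-ideal property combined with finiteness of $Q$.
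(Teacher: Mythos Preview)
Your proposal is correct and follows essentially the same approach as the paper: both argue that the hypothesis forces $m'=\gcd(G(sfQ(m)))=1$ (since no nonempty subset of $\mathrm{supp}(m)$ can be an order ideal of $Q$) and then invoke Theorem~\ref{sqfanalyticspread}. Your descending-chain argument simply spells out in more detail why a nonempty order ideal of a finite poset must contain a minimal element of $Q$.
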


\begin{proof}
No subset of $\{x_j ~|~ j \in {\rm supp}(m)\}$
is an order ideal in $Q$.  So $m'=
{\rm gcd}(G(I)) = 1$.  Now apply
Theorem \ref{sqfanalyticspread}.
\end{proof}

\bibliographystyle{abbrv}
\bibliography{references}

\end{document}